\newcommand\be {{\mathbf{e}}}
\newtheorem{theorem}{Theorem}[]
\newtheorem{corollary}[theorem]{Corollary}
\newtheorem{lemma}{Lemma}[section]
\newtheorem{proposition}{Proposition}[section]
\newtheorem{definition}{Definition}[section]
\newtheorem{remark}{Remark}[section]
\newtheorem{problem}{Problem}[section]
\newcommand{\Rmnum}[1]{\expandafter\@slowromancap\romannumeral#1@}
\renewcommand\ss {{\mathfrak{s}}}
\newcommand\Rm {{\mathbb{R}}}
\newcommand\Cm {{\mathbb{C}}}
\newcommand\Zm {{\mathbb{Z}}}
\newcommand\Sm {{\mathbb{S}}}
\newcommand\Dm {{\mathbb{D}}}
\newcommand\A {{\mathcal{A}}}
\newcommand\wtM {{\widetilde M}}
\newcommand\zbar {{\bar{z}}}
\newcommand\DD {{\mathscr{D}}}
\renewcommand{\d}{{{\mathrm d}}}
\newcommand\pp {{\mathsf{p}}}
\title{A family of non-simple surfaces whose transport twistor spaces admit global blow-down maps}
\author{Fran\c{c}ois Monard\thanks{Department of Mathematics, University of California, Santa Cruz CA 95064. Email: fmonard@ucsc.edu} \and Zhengyi Qi\thanks{Department of Mathematics, University of California, Santa Cruz CA 95064. Email: zqi34@ucsc.edu}}
\date{}
\begin{document}
\maketitle
\begin{abstract}
    In the literature on X-ray transform and Transport Twistor (TT) spaces, blow-down maps (or maps with holomorphic blow-down structure as defined in \cite{localglobalblowdownstransport}) are maps that desingularize the degenerate complex structure of the TT space of an oriented Riemannian surface, while collapsing (yet separating) geodesics of the unit tangent bundle of that surface. Such maps were originally constructed in \cite{localglobalblowdownstransport} for near-constant curvature simple surfaces, showing that the interior of their TT space is biholomorphic to an open set in standard $\Cm^2$. The construction there relied on a microlocal argument leveraging the absence of conjugate points. 

    In this note, we construct an explicit example of a family of convex, non-trapping Riemannian surfaces, some of which have conjugate points, yet all of whose TT spaces admit a global blow-down map. We also discuss a consequence on the existence of special geodesically invariant functions and its application to geometric inverse problems. 
\end{abstract}


\section{Introduction}\label{sec:intro}

{\em Transport Twistor} (TT) spaces, first introduced in \cite{bohr2021transport}, have emerged as a conceptual tool to reframe in complex-geometric terms the study of geometric inverse problems and dynamical systems on the unit tangent bundle of a given Riemannian surface. Any oriented Riemannian surface $(M,g)$ with unit tangent bundle $SM \stackrel{\pi}{\longrightarrow} M$ defined by
\begin{align}
    SM = \{(x,v)\in TM,\ g_x(v,v) = 1\},
    \label{eq:SM}
\end{align}
gives rise to a TT space, i.e. an involutive manifold $(Z,\DD)$, where $Z$ can be taken to be the unit ball bundle $BM = \{(x,v)\in TM,\ g_x(v,v)\le 1\}$, and $\DD\subset T_\Cm Z$ is a distinguished involutive complex 2-plane bundle. The structure $\DD$ defines a unique almost complex structure on $Z^\circ$, while capturing geodesic transport phenomena and fiberwise holomorphicity at its boundary component $SM$, see Sections \ref{sec:prelims} and \ref{sec:TTspace} for details. 

A number of interesting features about TT spaces have been established so far. A first important property is how they encapsulate the concept of fiberwise holomorphic geodesically invariant distributions (see, e.g. \cite{bohr2021transport,bohr2024invariant}) and fiberwise holomorphic integrating factors. These objects appear as key concepts for the inversion of the attenuated geodesic X-ray transform \cite{salo2011attenuated}, the non-abelian X-ray transform and the tensor tomography problem \cite{paternain_salo_uhlmann_2023}. In \cite{bohr2024invariant}, an explicit description of algebras of holomorphic functions on the TT spaces of some closed surfaces is given. The local and global integrability of the involutive structure $\DD$ have been studied in \cite{localglobalblowdownstransport}: for simple surfaces near constant curvature ones, one may construct a map $\beta \colon Z \to \Cm^2$ satisfying $d\beta(\DD) = 0$ and providing an embedding of the complex surface $Z^\circ$ into $\Cm^2$ with minimal degeneration at the boundary component $SM$; such maps are coined ``blow-down maps" there, and a more precise definition of their defining properties (``holomorphic blow-down structure") can be found in Definition \ref{def:hbs} below. One may also define some appropriate notion of biholomorphism amongst TT spaces in \cite{bohr2025biholomorphism}, and \cite[Corollary 1.7]{localglobalblowdownstransport} provides an important identity principle to help prove that TT spaces, within the classes of simple surfaces or Anosov surfaces separately, are rather rigid, in that such biholomorphisms can only arise from ``trivial'' ones (underlying Riemannian isometries, constant rescaling of the metric, or the antipodal map $(x,v)\mapsto (x,-v)$ on $BM$), see \cite[Theorems 1, 2]{bohr2025biholomorphism}.

This note is concerned with the construction of blow-down maps in cases of {\em non-simple}\footnote{A Riemannian surface $(M,g)$ with boundary is {\em simple} if $\partial M$ is geodesically convex, $M$ is non-trapping and $M$ has no conjugate points.} surfaces, specifically, convex and non-trapping surfaces {\em with conjugate points}, showing that embeddability $Z^\circ\hookrightarrow \Cm^2$ (and, more specifically, having holomorphic blow-down structure) can still be achieved in non-simple cases; see Theorem \ref{thm:main} below. The current proof is an explicit construction which is out of the reach of the machinery set forth in \cite{localglobalblowdownstransport}, one of whose key steps leverages simplicity. More specifically, the construction in \cite{localglobalblowdownstransport} produces an ansatz for a map $\beta$ which requires invertibility of the ``normal operators" $N_0$ and $N_1$ of the geodesic X-ray transform on functions and on sections of $T^{1,0}M$, respectively. This invertibility crucially requires that the underlying surface be simple, notably as the absence of conjugate points is what makes $N_0,N_1$ elliptic pseudo-differential operators on the interior of $(M,g)$. 

In the literature on two-dimensional geometric inverse problems on manifolds with boundary, simplicity is an intriguing interface between positive and negative results. Simple surfaces are known to be boundary distance rigid \cite{pestov2005two}, where the geodesic X-ray transform is injective \cite{Mukhometov1975} and H\"older-stable (see, e.g., \cite[Theorem 4.6.4]{paternain_salo_uhlmann_2023}). When conjugate points enter the picture, H\"older stability positively breaks down \cite{monard2015geodesic}, yet injectivity holds in rotation-invariant situations \cite{sharafutdinov1997integral} or over piecewise constant functions \cite{ilmavirta2020geodesic}, and is still open in general convex, non-trapping surfaces. On the dual side, the construction of explicit preimages for an adjoint of the X-ray transform (the backprojection operator), or equivalently, the construction of geodesically invariant distributions with prescribed fiberwise Fourier modes, is of interest to tensor tomography and generally a cornerstone of this literature, see e.g. \cite[Chapter 8]{paternain_salo_uhlmann_2023} or \cite{pestov2005two}. Such existence/surjectivity results largely leverage simplicity, and our main result appears to positively push them past one of the simplicity thresholds. We state a corollary to that effect below (see Corollary \ref{cor}).

We now state the main result and give an outline of the remainder at the end of the next section. 

\section{Statement of the main result}

\subsection{Preliminaries} \label{sec:prelims}

Here and below, $(M,g)$ denotes an oriented Riemannian surface with unit tangent bundle $SM$ as in \eqref{eq:SM}, and $X\in C^\infty(SM; T(SM))$ will denote the generator of the geodesic flow on $SM$. We denote $Z = BM$ the unit ball bundle of $(M,g)$. We equip $Z$ with a rank-2 involutive distribution $\DD \subset T_\Cm Z$ making $(Z,\DD)$ an involutive manifold (with corners when $\partial M\ne \emptyset$). The distribution $\DD$, addressed more explicitly in Section \ref{sec:TTspace} below, is characterized (uniquely so, by virtue of \cite[Theorem 2.6]{Bohr2024}) by the following properties: 
\begin{enumerate}
    \item[(a)] $\DD \cap \overline{\DD} = \begin{cases} \Cm X, & SM, \\ 0, & Z\backslash SM. \end{cases}$ \\
	    Away from $SM$, this implies that $\DD = \ker (J+i)$ for a complex structure $J$. In particular, $(Z^\circ,J)$ is a classical complex surface.
    \item[(b)] For every $x\in M$, the fiber $Z_x = B_x M$, equipped with its standard complex structure induced by $g$ and the orientation, is holomorphically embedded in $Z$.
    \item[(c)] The orientation induced by $J$ is the canonical one on $TM$. 	
\end{enumerate}
We say that a map $\beta\colon Z\to \Cm_{w,\xi}^2$ is {\bf holomorphic} if $d\beta|_{(x,v)} (\DD|_{(x,v)}) \subset (\Cm \partial_{\bar{w}} \oplus \Cm \partial_{\bar{\xi}})|_{\beta(x,v)}$ for all $(x,v)\in Z$, and we are interested in the existence of such maps that are maximally injective, and how this question relates to the geometry of $(M,g)$. By condition (a), the components of a holomorphic map must be geodesically invariant on $SM$ (i.e., in $\ker X$), and as such, $\beta|_{SM}$ can, at best, separate geodesics, but not points on them. On the other hand, on $Z^\circ$ where the Newlander-Nirenberg theorem guarantees local biholomorphisms (see, e.g. \cite[Theorem 2.6.19]{Huybrechts2005}, injectivity can potentially be obtained globally, and this is indeed the case for the TT space of simple Riemannian surfaces of near-constant curvature, as proved in \cite{localglobalblowdownstransport}. There, to make such maps stable under metric perturbation, the notion of map with {\bf holomorphic blow-down structure} was defined. While we postpone a full definition to Section \ref{sec:proofmain} (see Definition \ref{def:hbs}), our main result, Theorem \ref{thm:main} below, is an existence statement for such maps for a family of non-simple surfaces, which in particular implies: 

(1) The restriction $\beta|_{SM}$ separates geodesics.

(2) The restriction $\beta|_{Z^\circ}$ is a biholomorphism onto its image. 

\subsection{Main result}

Consider, for $R>0$, the surface $M = \Dm_R := \{(x,y)\in \Rm^2, x^2+y^2\le R^2\}$, equipped with the metric $g_\kappa$, expressed in polar coordinates $(x,y)=P(r,\theta) := (r\cos\theta,r\sin\theta)$, 
\begin{align}
    P^* g_\kappa = (1+\kappa r^2)^2 \ \d r^2 + r^2 \d\theta^2.
    \label{eq:g}
\end{align}
The metric $g_\kappa$ is indeed smooth at the origin $(x,y)=(0,0)$, and with Gaussian curvature $K(r)= \frac{2\kappa}{(1+\kappa r^2)^3}$. An oriented orthonormal frame for $P^* g_\kappa$ is given by $(\be_r,\be_\theta) := \left( \frac{1}{1+\kappa r^2} \partial_r, \frac{1}{r}\partial_\theta\right)$. After covering the construction of the TT space in Section \ref{sec:TTspace}, we show in Lemma \ref{lem:be} that 
\begin{align}
    \be := P_* (\cos \theta \be_r - \sin\theta\be_\theta)
    \label{eq:be}
\end{align}
is a smooth section of $SM$, and as such induces a global chart on $Z=BM$ given by 
\begin{align} 
    M\times \Dm \ni ((x,y),\nu) \mapsto ((x,y), \nu\cdot \be) \in BM,
    \label{eq:Zcoords}
\end{align}
where "$\cdot$" denotes the complex multiplication defined in \eqref{eq:complexmult}, induced by the metric and orientation of $(M,g)$. Below, we write $z= x+iy = re^{i\theta}$ for computational convenience, although this is, in general, not a holomorphic coordinate on $(\Dm_R,g_\kappa)$. In the coordinates $(z,\nu)$, the involutive distribution of $Z$ takes the form $\DD = \Cm\partial_{\bar{\nu}} \oplus \Cm \Xi$, where 
\begin{align}
\Xi := \left(  (2 + \kappa z\bar{z})  \nu^2 -\kappa z^2\right) \partial_z + \left( 2 + \kappa z \bar{z} -\kappa\nu^2\bar{z}^2\right) \partial_{\bar{z}} + \kappa \left( z - \nu^2 \bar{z} \right) \left( \bar{\nu} \partial_{\bar{\nu}} - \nu \partial_\nu \right).
\label{eq:Xi}
\end{align}

Our main result is as follows. 
\begin{theorem}\label{thm:main}
Consider $(M,g) = (\Dm_R, g_\kappa)$ defined by \eqref{eq:g} where $R>0$ and $\kappa>-\frac{1}{R^2}$. On its transport twistor space $(Z,\DD)$, in the coordinates $(z,\nu)$ defined in \eqref{eq:Zcoords}, define
\begin{align}
    w(z,\nu) := (z-\bar{z}\nu^2)e^{\frac{\kappa(z\bar{z}-\bar{z}^2\nu^2)}{2}},\qquad \xi(z,\nu) := \nu e^{\frac{\kappa(z\bar{z}-\bar{z}^2\nu^2)}{2}}, \qquad (z,\nu)\in \Dm_R\times \Dm.
\end{align}
Then the map $\beta := (w,\xi)$ has holomorphic blow-down structure. 
\end{theorem}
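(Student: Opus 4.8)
The plan is to verify the three defining clauses of holomorphic blow-down structure (Definition \ref{def:hbs}) in turn: that $\beta$ is holomorphic, that $\beta|_{Z^\circ}$ is a biholomorphism onto its image, and that $\beta|_{SM}$ realizes the prescribed blow-down degeneracy while separating geodesics.

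First I would prove holomorphicity by an integrating-factor computation rather than by brute force. Writing $u := z - \bar z\nu^2$, the exponent becomes $E = \tfrac{\kappa}{2}\bar z\,u$, so that $w = u\,e^{E}$ and $\xi = \nu\,e^{E}$. Since none of $u$, $\nu$, $E$ involves $\bar\nu$, we get $\partial_{\bar\nu}w = \partial_{\bar\nu}\xi = 0$ at once. The content is then the pair $\Xi w = \Xi\xi = 0$, which I would deduce from the three scalar relations
\begin{align}
\Xi u = -\kappa\,u^{2}, \qquad \Xi\nu = -\kappa\,u\,\nu, \qquad \Xi E = \kappa\,u,
\end{align}
each a direct calculation from \eqref{eq:Xi}; combining them gives $\Xi w = (\Xi u + u\,\Xi E)e^{E} = 0$ and $\Xi\xi = (\Xi\nu + \nu\,\Xi E)e^{E} = 0$. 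Because $\DD = \Cm\partial_{\bar\nu}\oplus\Cm\Xi$, this yields $d\beta(\DD)\subset\Cm\partial_{\bar w}\oplus\Cm\partial_{\bar\xi}$. At $\kappa = 0$ the map degenerates to the Euclidean blow-down $(z-\bar z\nu^{2},\nu)$, and the factor $e^{E}$ is precisely the curvature-correcting integrating factor forced by the Riccati identity $\Xi u = -\kappa u^{2}$, whose nonlinearity is the analytic trace of possible conjugate points.

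Next I would treat the interior. On $Z^\circ$ the $(1,0)$-space $\overline{\DD}$ is framed by $\{\partial_\nu,\bar\Xi\}$, and holomorphicity gives $d\beta(\partial_\nu) = (\partial_\nu w)\partial_w + (\partial_\nu\xi)\partial_\xi$ and $d\beta(\bar\Xi) = (\bar\Xi w)\partial_w + (\bar\Xi\xi)\partial_\xi$, so $\beta$ is a local biholomorphism exactly where the holomorphic Jacobian
\begin{align}
\mathcal{J} := (\partial_\nu w)(\bar\Xi\xi) - (\partial_\nu\xi)(\bar\Xi w)
\end{align}
is nonzero. I would compute $\mathcal{J}$ directly: at $z = 0$ (where $E = 0$) one finds $\mathcal{J} = -2(1-|\nu|^{4})$, nonvanishing for $|\nu|<1$, and I expect the general expression to factor as a nonvanishing exponential times a term controlled by $1-|\nu|^{4}$ and the admissibility $\kappa > -1/R^{2}$ (i.e.\ $1+\kappa r^{2}>0$), so that $\mathcal{J}\neq 0$ throughout $Z^\circ$. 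Global injectivity is the more delicate point: I would upgrade local injectivity through a properness/covering argument, namely that $\beta$ extends continuously to $Z$, maps $Z^\circ$ onto an open set, and (after checking that interior and boundary images are disjoint) is a proper local diffeomorphism, hence a covering of its connected image; counting sheets along the central fiber $\{z=0\}$, where $\beta = (0,\nu)$ is manifestly injective, pins the covering to a single sheet.

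Finally I would analyze $SM$ ($|\nu|=1$). There $\DD\cap\overline{\DD} = \Cm X$, so $X\in\DD$; since $X$ is real while $d\beta(\DD)\subset\Cm\partial_{\bar w}\oplus\Cm\partial_{\bar\xi}$, the vector $d\beta(X)$ is simultaneously real and of type $(0,1)$, forcing $d\beta(X)=0$. Thus $\beta$ is constant along geodesics, and I would verify $\ker d\beta|_{SM}=\Rm X$ exactly by checking that a frame of $T(SM)$ complementary to $X$ maps to independent vectors. The remaining and hardest step is separation of geodesics, i.e.\ injectivity of the induced map on the $2$-dimensional space of geodesics; here I would exploit the rotational symmetry of $g_\kappa$, whose geodesics are governed by Clairaut's first integral together with an angular phase, and match these conserved quantities to $(w,\xi)|_{SM}$ to reconstruct a geodesic from its image. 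This geodesic-separation step, together with the global interior injectivity above, are exactly the places where conjugate points could in principle obstruct the conclusion; the pointwise conditions are routine, so these two global statements are the main obstacle, to be cleared either by the covering/degree argument or, in the spirit of the perturbation stability motivating Definition \ref{def:hbs}, by anchoring at the explicit $\kappa=0$ case and propagating.
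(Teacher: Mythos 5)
Your holomorphicity step is correct and complete --- the relations $\Xi u = -\kappa u^2$, $\Xi\nu = -\kappa u\nu$, $\Xi E = \kappa u$ all check out against \eqref{eq:Xi}, and they give $\Xi w = \Xi\xi = 0$ more transparently than the paper, which omits this computation as ``direct.'' Your guess about the interior Jacobian is also right: in the frame $(\bar{\Xi},\partial_\nu)$ one finds $\mathcal{J} = -2A^2(1-|\nu|^4)(1+\kappa z\bar{z})$ with $A = e^{\kappa(z\bar{z}-\bar{z}^2\nu^2)/2}$, nonvanishing on $Z^\circ$ precisely because $\kappa > -1/R^2$ (you only verified this at $z=0$ and left the rest as an expectation). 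The first genuine gap is global injectivity on $Z^\circ$. Your properness/covering argument hinges on the disjointness $\beta(\partial Z)\cap\beta(Z^\circ)=\emptyset$, which you defer in a parenthesis but which is essentially the whole difficulty: $\partial Z$ has the two faces $SM=\{|\nu|=1\}$ and $\{|z|=R\}$, and neither disjointness is routine. For instance, when $\kappa>0$ one has $|\xi|\ge 1$ on $SM$, but $|\xi|$ also exceeds $1$ at interior points with $|\nu|$ near $1$ and $\bar{z}^2\nu^2$ near $-r^2|\nu|^2$, so no modulus criterion separates the images; and disjointness from the face $\{|z|=R\}$ is itself an injectivity statement of exactly the kind you are trying to establish, so the argument risks circularity. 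The paper avoids this entirely by a direct reconstruction on all of $Z\setminus SM$: setting $c=w/\xi$, recovery of $(z,\nu)$ reduces to the scalar equation $|\xi|^2 = h_c(|\nu|^2)$, $h_c(u) = u\exp\left(\kappa u\left(\tfrac{c_R^2}{1-u}+\tfrac{c_I^2}{1+u}\right)\right)$, and any admissible root must lie where $h_c'>0$ (because $\mathrm{sgn}\,h_c'(u) = \mathrm{sgn}(1+\kappa z\bar{z})$), a region on which $h_c$ is injective. Your fallback of ``anchoring at $\kappa=0$ and propagating'' is a dead end: the theorem's content is precisely the non-perturbative regime $\kappa R^2\ge 1$.

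The second, more basic gap is that you have substituted your own three conditions for those of Definition \ref{def:hbs}. Holomorphicity is a prerequisite, not one of the clauses, and clause (c) --- the uniform Hermitian bound $\beta^*\Omega_{\Cm^2}\ge c\,\underline{\Omega}$ --- is entirely absent from your plan. In the paper it is proved by expressing $\beta^*\d w$, $\beta^*\d\xi$ in the frame $(\eta_1,\eta_2)$ of \eqref{eq:eta}, forming the Hermitian matrix $H$, and bounding $\lambda_{\min}(H)\ge \det H/\mathrm{tr}\,H$ below uniformly, using $|ad-bc| = 2(1+\kappa z\bar{z})$ together with uniform upper bounds on $|a|,|b|,|c|,|d|$; nothing in your outline produces such a quantitative estimate. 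Likewise, clause (a) demands more than separating geodesics: $\beta|_{\partial_+SM}$ must be a totally real $C^\infty_\alpha$-embedding, which the paper verifies via injectivity on $\partial_+SM$ (your Clairaut idea is exactly right --- one finds $w/\xi = 2iR\sin\alpha$ there) \emph{plus} the nonvanishing of the rescaled Jacobian $\cos^{-2}(\alpha)\left|\frac{\partial w}{\partial\theta}\frac{\partial\xi}{\partial\alpha}-\frac{\partial w}{\partial\alpha}\frac{\partial\xi}{\partial\theta}\right|^2$, which handles the degeneracy at the glancing set; your proposed check that $\ker d\beta|_{SM} = \Rm X$ does not supply this. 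As it stands, your proposal fully proves only the holomorphicity of $\beta$, sketches (a) and (b) with the hard global steps missing, and omits (c).
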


As a result, $Z^\circ$ embeds into $\Cm^2$ for all such cases of $(R,\kappa)$. This is the first example of an explicit blow-down map in non-constant curvature, as well as in non-simple cases. Indeed, we also establish in Section \ref{sec:characsimple} below that $(\Dm_R, g_\kappa)$ has conjugate points whenever $\kappa R^2 \ge 1$, see also Corollary \ref{cor:gkappa} and an illustration of it on Figure \ref{fig:gkappa}.

\begin{figure}[htpb]
    \centering
    \includegraphics[clip, trim = 0 0 0 17, width=0.18\linewidth]{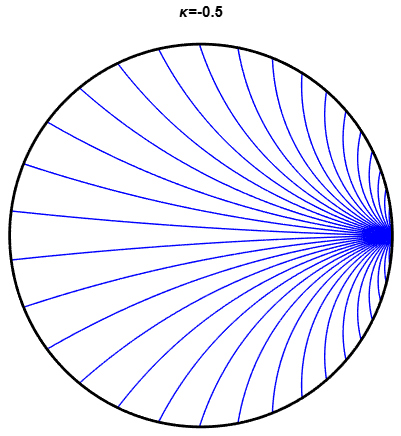}
    \includegraphics[clip, trim = 0 0 0 17, width=0.18\linewidth]{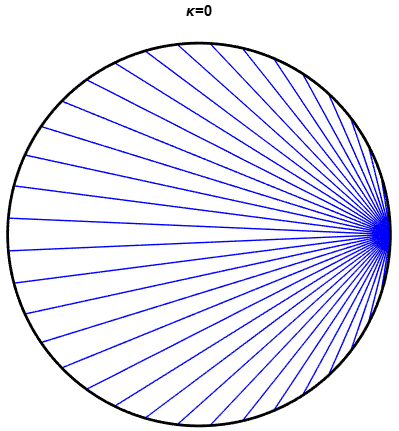}
    \includegraphics[clip, trim = 0 0 0 17, width=0.18\linewidth]{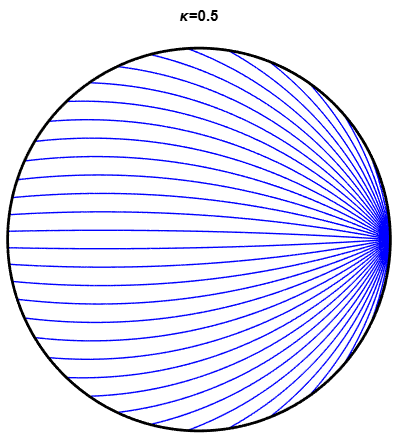}
    \includegraphics[clip, trim = 0 0 0 17, width=0.18\linewidth]{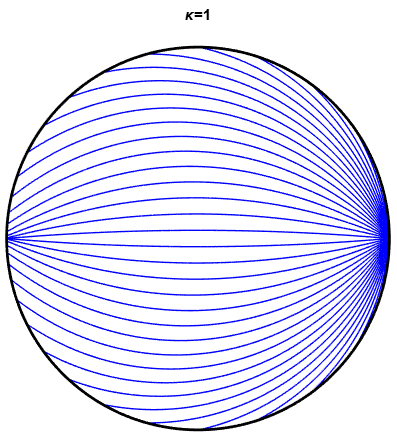}
    \includegraphics[clip, trim = 0 0 0 17, width=0.18\linewidth]{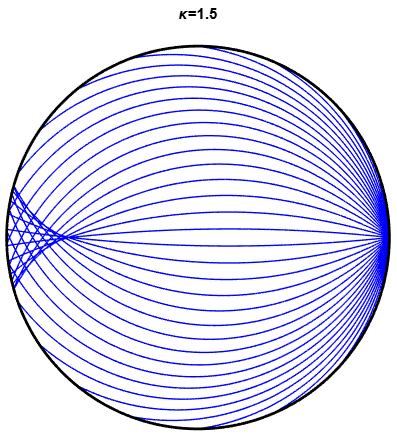}
    \caption{Left to right: some geodesics of $(\Dm_R, g_\kappa)$ defined in \eqref{eq:g} for $R=1$ and $\kappa \in \{-0.5, 0, 0.5, 1, 1.5\}$. Conjugate points appear at the threshold value $\kappa =1$.}
    \label{fig:gkappa}
\end{figure}

\subsection{Consequences for inverse problems: invariant distributions}

The metric and orientation of $(M,g)$ define a unique circle action $\rho_t\colon SM\to SM$ (given by  \eqref{eq:complexmult} below with $\omega = e^{it}$) with infinitesimal generator $V$. Defining 
\begin{align}
    \Omega_k := \{f\in C^\infty (SM) \colon Vf = ikf\}, \quad k\in \Zm,
\end{align}
any $u\in C^\infty(SM)$ has a unique decomposition $u = \sum_{k\in\Zm} u_k$ into its vertical Fourier modes $u_k\in\Omega_k$. Of special interest for integral-geometric inverse problems are  functions $u\in C^\infty(SM)$ which are geodesically invariant (i.e., satisfying $Xu=0$ on $SM$) and with a Fourier support condition of the form $u_k = 0$ for all $k< m$ for some $m\ge 0$ (if $m=0$, this corresponds to the definition of being {\em fiberwise holomorphic}, as originally defined in \cite{salo2011attenuated}). When this is the case, the "bottom" mode $u_m$ satisfies $\eta_- u_m = 0$, where $\eta_- := \frac{1}{2} (X-i[X,V])$ is the so-called Guillemin-Kazhdan operator, elliptic on sections of $\Omega_m$. Hence an important problem in general is: 
\begin{problem}\label{prob}
Fix $(M,g)$ an oriented Riemannian surface. Given $m\ge 0$ and $\phi\in \Omega_m\cap \ker\eta_-$, does there exist $u\in C^\infty (SM) \cap \ker X$ with $u_m = \phi$ and $u_k = 0$ for $k<m$?
\end{problem}
 When $(M,g)$ is a simple surface, a positive answer to Problem \ref{prob} is a consequence of \cite[Lemma 5.6]{Paternain2013a}, based on microlocal arguments which leverage simplicity. Other explicit constructions in the Euclidean setting are given in \cite{Monard2017a}, and their relevance to tensor field reconstructions from their geodesic X-ray transform can be seen in, e.g., \cite{Monard2017a,assylbekov2018inversion}.

Problem \ref{prob} can be reinterpreted in Transport Twistor terms as follows. See also \cite[Sec. 2.1 and 3.1.1]{bohr2024invariant} for a more extensive discussion. By \cite[Proposition 4.4]{bohr2021transport}, the space of smooth, fiberwise holomorphic, (geodesically) invariant functions on $SM$ is in 1:1 correspondence with the algebra of "holomorphic" functions on TT space
\begin{align}
    \A(Z) = \{u\in C^\infty(Z),\ \d u (\DD) = 0\},
\end{align}
with the isomorphism given by the map $\A(Z)\ni f\mapsto f|_{SM}$. The same map induces, for any $m\ge 0$, an isomorphism between invariant smooth functions on $SM$ of the form $u = \sum_{k\ge m} u_k$ and 
\begin{align}
    \A_m(Z) := \Big\{f\in \A(Z)\colon f|_{SM} = \sum_{k\ge m} u_k \Big\},
\end{align}
and the associated ``bottom mode" $u_m \in \Omega_m \cap \ker \eta_-$ is in 1:1 correspondence with an element of the space of holomorphic differentials
\begin{align}
    {\cal H}_m & := \{a\in C^\infty (M,\otimes^m (T^{1,0}M)^*)\colon \bar\partial a= 0\},
\end{align}
via the map ${\cal H}_m \ni a \mapsto ( (x,v) \mapsto a_x (v^{\otimes m}))$. Given an element $f\in \A_m(Z)$, we may view the $m$-th Fourier mode $(f|_{SM})_m$ as an element $a\in {\cal H}_m$ which we denote $a = {\pi_m}_* f$. This scenario is summarized in the following (not necessarily exact) sequence 
\begin{align}
        0\to \A_{m+1}(Z) \hookrightarrow \A_m(Z) \stackrel{ {\pi_m}_*}\longrightarrow {\cal H}_m \to 0.
        \label{eq:sequence}
\end{align}
 Then Problem \ref{prob} is equivalent to asking whether \eqref{eq:sequence} is exact, which is known to be true for simple surfaces, in particular in the case where $|\kappa|R^2 <1$ in the model $(\Dm_R, g_\kappa)$ above. 

 In the non-simple cases of this family, we now prove a slightly weaker result as follows. A holomorphic coordinate for $(\Dm_R, g_\kappa)$ defined above is given by $\zeta = z e^{\kappa z\zbar/2}\in \Dm_{R_1}$, where $R_1 = R e^{\kappa R^2/2}$. Then an element $a\in {\cal H}_m$ takes the form $A(\zeta)\d\zeta^m$, where $\partial_{\bar\zeta} A = 0$ on $\Dm_{R_1}^\circ$ and $A\in C^\infty(\Dm_{R_1})$. Such elements admit a power series in $\zeta$
\begin{align}
    A(\zeta) = \sum_{k=0}^\infty A_k \zeta^k, \qquad A_k \in \Cm,
    \label{eq:pse}
\end{align}
with radius of convergence at least $R_1$. Below, for $R_2>R_1$, we say that an element $a\in {\cal H}_m$ {\em extends smoothly to $\Dm_{R_2}$} if the radius of convergence of \eqref{eq:pse} is at least $R_2$ and the power series extends smoothly to $\Dm_{R_2}$.

\begin{corollary}[Holomorphic extensions]\label{cor}
    Consider $(M,g) = (\Dm_R, g_\kappa)$ defined by \eqref{eq:g} where $R>0$ and $\kappa \ge 0 $, and define $R_1 := R e^{\kappa R^2/2}$ and $R_2 = 2R e^{\kappa R^2}>R_1$. Fix $m\ge 0$. If $a \in {\cal H}_m$ extends smoothly to $\Dm_{R_2}$, then there exists $f\in \A_m(Z)$ such that $\pi_{m*} f = a$. Equivalently, for $\phi$ the unique element of $\in \Omega_m\cap \ker\eta_-$ associated with $a$, there exists $u\in C^\infty (SM)$ such that $Xu=0$, $u_m = \phi$ and $u_k = 0$ for all $k<m$.
\end{corollary}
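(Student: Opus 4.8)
The plan is to exploit the holomorphic blow-down structure of $\beta=(w,\xi)$ furnished by Theorem \ref{thm:main}: since $\d w(\DD)=\d\xi(\DD)=0$, for any $F$ holomorphic in two variables the composition $F\circ\beta$ satisfies $\d(F\circ\beta)(\DD)=0$, hence lies in $\A(Z)$ as soon as it is smooth on $Z$. Given $a=A(\zeta)\,\d\zeta^m\in{\cal H}_m$ with $A(\zeta)=\sum_{k\ge0}A_k\zeta^k$, I would take $F(w,\xi):=\xi^m A(w)$ and set
\[
    f := \xi^m\,A(w) = \xi^m\sum_{k\ge0}A_k\,w^k .
\]
Because $A$ is holomorphic in one variable, $F$ is holomorphic in $(w,\xi)$, so once smoothness is established we obtain $f\in\A(Z)$. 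It then remains to verify that $f$ has vertical Fourier support in $\{k\ge m\}$, that $\pi_{m*}f=a$, and that $f$ is genuinely smooth on all of $Z$.

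For the mode computation, recall $w|_{\nu=0}=z\,e^{\kappa z\bar z/2}=\zeta$ and $\xi=\nu\,e^{\kappa(z\bar z-\bar z^2\nu^2)/2}$. Expanding in $\nu$, one has $w=\zeta+O(\nu^2)$ and $\xi^m=\nu^m e^{m\kappa z\bar z/2}+O(\nu^{m+2})$, and in both cases only non-negative (in fact only $\nu^{\ge m}$ for $f$) powers of $\nu$ appear, with no $\bar\nu$ dependence; hence $f$ is fiberwise holomorphic with Fourier support in $\{k\ge m\}$, so $f\in\A_m(Z)$, and extracting the lowest mode gives $(f|_{SM})_m=A(\zeta)\,e^{m\kappa z\bar z/2}\,\nu^m$. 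To match this against $a$, write $g_\kappa=e^{2\lambda}|\d\zeta|^2$ in the holomorphic coordinate $\zeta=z\,e^{\kappa z\bar z/2}$; a short computation gives $\lambda=-\tfrac{\kappa}{2}z\bar z$, and expressing $\be$ in the $\zeta$-plane shows that it points in the fixed direction of increasing $\mathrm{Re}\,\zeta$. Consequently the element of $\Omega_m\cap\ker\eta_-$ attached to $a$ is $u_m(x,v)=a_x(v^{\otimes m})=A(\zeta)\,(e^{-\lambda}\nu)^m=A(\zeta)\,e^{m\kappa z\bar z/2}\,\nu^m$, which agrees with $(f|_{SM})_m$. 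Thus $\pi_{m*}f=a$.

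The main obstacle is the smoothness of $f$ on the closed bundle $Z$, and this is exactly what the extension hypothesis is designed to supply. Since $A$ is only guaranteed to be holomorphic on $\Dm_{R_2}$ and smooth up to its boundary, I must control the range of $w$ over $Z$. Using $|z|\le R$, $|\nu|\le1$ and $\kappa\ge0$, a crude estimate yields
\[
    |w| = |z-\bar z\nu^2|\,\exp\!\Big(\tfrac{\kappa}{2}\,\mathrm{Re}\big(z\bar z-\bar z^2\nu^2\big)\Big) \le 2R\, e^{\kappa R^2} = R_2,
\]
with strict inequality on $Z^\circ$. Hence $w$ maps $Z^\circ$ into $\Dm_{R_2}^\circ$ and $Z$ into $\overline{\Dm_{R_2}}$, so $A(w)$—and therefore $f$—is holomorphic on $Z^\circ$ and smooth on $Z$; this is precisely where the assumption that $a$ extends smoothly to $\Dm_{R_2}$ enters, and where the constant $R_2=2R\,e^{\kappa R^2}$ originates. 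Finally, the equivalent formulation in terms of $\phi\in\Omega_m\cap\ker\eta_-$ follows at once by taking $u=f|_{SM}$ and invoking the isomorphism $\A_m(Z)\cong\{u\in C^\infty(SM)\cap\ker X:\ u_k=0,\ k<m\}$ underlying \eqref{eq:sequence}.
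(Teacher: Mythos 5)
Your proposal is correct and follows essentially the same route as the paper: both define $f=\xi^m A(w)$, use $\max_Z|w|\le 2Re^{\kappa R^2}=R_2$ (with $\kappa\ge 0$) to make $A\circ w$ smooth on $Z$, deduce $\Xi f=\partial_{\bar\nu}f=0$ from Leibniz' rule, and read off the Fourier support and bottom mode from the expansion in $\nu$. Your identification of the bottom mode with $a$ via the conformal factor $e^{2\lambda}=e^{-\kappa z\bar z}$ is just a rephrasing of the paper's direct computation $\d\zeta(e^{i\alpha}\cdot\be_r)=e^{i\alpha}e^{i\theta}e^{\kappa r^2/2}$, so the two arguments coincide in substance.
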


One might conjecture that the sequence \eqref{eq:sequence} is still exact in those cases, although this result is beyond the reach of the current proof, which also does not recover exactness of the sequence in the simple range $\kappa R^2 \in [0,1)$. 

\paragraph{Outline.} The remainder of the article is organized as follows. Since $(\Dm_R,g_\kappa)$ is an example of disk with a rotationally invariant metric satisfying Herglotz' nontrapping condition (in short, a {\em Herglotz disk}), we first discuss generalities about the family of Herglotz disks in Section \ref{sec:model}, including a characterization of simplicity expressed in terms of a {\em scattering function} $\ss(\alpha)$, see Proposition \ref{prop:characsimplicity}. We then discuss the TT space in Section \ref{sec:TTspace}, including the coordinate-free construction, and coordinate systems that make computations tractable. We end with the proofs of Theorem \ref{thm:main} and Corollary \ref{cor} in Section \ref{sec:proofmain}.

\section{The model} \label{sec:model}

\subsection{Convex, nontrapping surfaces} \label{sec:definitions}

Given an oriented Riemannian surface $(M,g)$ with boundary $\partial M$, we denote the geodesic flow $\varphi_t\colon SM\to SM$, defined for 
\begin{align*}
    \{(x,v,t),\ (x,v)\in SM,\ t\in [-\tau(x,-v),\tau(x,v)]\},
\end{align*}
where $\tau(x,v)\in [0,\infty]$ is the first exit time of the geodesic passing through $(x,v)$ at $t=0$. The boundary $\partial SM$ is the union of the incoming/outgoing ($+/-$) boundaries
\begin{align}
\partial_\pm SM = \{(x,v)\in SM,\ x\in \partial M,\ \pm g_x(v,\nu_x)\ge 0\},
\label{eq:in_out}    
\end{align}
where $\nu_x$ is the inward-pointing unit normal at $x\in \partial M$.

Here and below, we will assume that (i) $\partial M$ is {\bf strictly convex} in the sense of having positive definite second fundamental form, and that (ii) $(M,g)$ is {\bf non-trapping} in the sense that $\sup_{(x,v)\in SM}\tau(x,v)<\infty$. 

 When $(M,g)$ is convex and non-trapping, the manifold-with-boundary $\partial_+ SM$ is used as a model for all oriented geodesics through $M$, and the {\bf scattering relation} $S: \partial SM \to \partial SM$, defined by
\begin{align}
    S(x,v) := 
    \begin{cases}
    \varphi_{\tau(x,v)}(x,v), &  (x,v)\in \partial_+ SM, \\
    \varphi_{-\tau(x,-v)}(x,v), &  (x,v)\in \partial_- SM,
    \end{cases}
    \label{eq:scatrel}
\end{align}
is a smooth diffeomorphism of $\partial SM$ satisfying $S\circ S = \mathrm{Id}$, with set of fixed points the glancing boundary $\partial_0 SM = \partial_+ SM\cap \partial_- SM$.

\subsection{Rotation-invariance}

A special family of convex, non-trapping surfaces are the topological disks equipped with a rotation-invariant metric of the form 
\begin{align}
    P^* g = a^2(r)\ \d r^2 + r^2 \d\theta^2.
    \label{eq:ametric}
\end{align}
 Smoothness of the metric at the origin implies that 
\begin{align}
    a(r) = 1 + r^2 \tilde{a}(r^2), \qquad \text{for some function} \quad \tilde{a}\in C^\infty ([0,R^2]).
    \label{eq:tildea}
\end{align}
Indeed, if $(x,y)=(r\cos\theta,r\sin\theta)$, the function $g(\partial_x,\partial_x) = 1+\cos^2\theta (a(r)^2-1)$ can only be smooth at the origin if $a(0) = 1$ and the Taylor expansion of $a(r)^2-1$ off $r=0$ is only made of even terms. The latter implies a similar conclusion on $a$.

In the form \eqref{eq:ametric}, such metrics automatically satisfy Herglotz' non-trapping condition (see, e.g., \cite[Def. 2.3.3]{paternain_salo_uhlmann_2023}), which implies both convexity and non-trapping. Below, we call such surfaces {\em Herglotz disks}. 

To study the scattering relation on a Herglotz disk, we introduce the \emph{fan-beam coordinates} $(\theta,\alpha)$ on $\partial SM$, where $\theta$ parameterizes a boundary point $x = Re^{i\theta} \in \partial \Dm_R$, and $\alpha$ parametrizes the tangent vector $v(\alpha) = -\cos\alpha\ \be_r - \sin\alpha\ \be_\theta$. We thus denote for short $\tau(\alpha):= \tau (Re^{i\theta}, v(\alpha))$, a quantity independent of $\theta$. In such coordinates, the symmetries induced by rotation invariance imply the reduced form for the scattering relation
\begin{align}
    S(\theta,\alpha) = (\theta'(\theta,\alpha), \alpha'(\theta,\alpha)) = (\theta + \pi + 2\ss(\alpha), \pi - \alpha), \qquad (\theta,\alpha) \in (\Rm/2\pi\Zm)^2,    
    \label{eq:S}
\end{align}
where $\ss(\alpha)$ satisfies
\[
\ss(\alpha+\pi) = \pi + \ss(\alpha),
\]
a consequence of the involutive property $S \circ S = \mathrm{Id}$. Reflection symmetry also implies that $\ss(-\alpha) = -\ss(\alpha)$, in particular $\ss(0) = 0$, and since $S$ fixes points on $\partial_0 SM$, we have $\ss(\pm \pi/2) = \pm \pi/2$. For example, $\ss(\alpha) = \alpha$ for a Euclidean disk ($a\equiv 1$). The scattering function $\ss(\alpha)$ is the only part of the scattering relation which may help set apart two rotation-invariant metrics. 

Our main result for this section is the following: 
\begin{lemma}\label{lem:ssgkappa}
    The scattering function of the model $(\Dm_R,g_\kappa)$ is given by
    \begin{align}
	\ss(\alpha) = \alpha- \frac{\kappa R^2}{2} \sin(2\alpha).
	\label{eq:ssgkappa}
    \end{align}    
\end{lemma}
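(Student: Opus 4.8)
The plan is to compute the scattering relation directly by integrating the geodesic equations for the rotation-invariant metric \eqref{eq:g}, exploiting the conserved angular momentum (Clairaut's relation). For a unit-speed geodesic of $P^*g_\kappa = a^2(r)\,\d r^2 + r^2\,\d\theta^2$ with $a(r) = 1+\kappa r^2$, rotational invariance yields the conserved quantity $\ell = r^2\dot\theta$, while the unit-speed constraint reads $a^2\dot r^2 + r^2\dot\theta^2 = 1$. Combining these produces the separated equation
\[
\frac{\d\theta}{\d r} = \frac{\dot\theta}{\dot r} = \pm\frac{a(r)\,\ell}{r\sqrt{r^2-\ell^2}}.
\]

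First I would identify $\ell$ and the turning point in the fan-beam coordinates of \eqref{eq:S}. Decomposing $v(\alpha) = -\cos\alpha\,\be_r - \sin\alpha\,\be_\theta$ in the orthonormal frame and using $v = (a\dot r)\be_r + (r\dot\theta)\be_\theta$ at $r = R$ gives $R\dot\theta = -\sin\alpha$, hence $\ell = -R\sin\alpha$; the closest approach $r_0$, where $\dot r = 0$, is $r_0 = R|\sin\alpha|$. By the reflection symmetry of the metric about the radial line through the turning point, the geodesic is symmetric in $r$, so its total angular change is twice the contribution from $r_0$ to $R$:
\[
\Delta\theta = \mathrm{sign}(\ell)\cdot 2\int_{r_0}^R \frac{a(r)\,|\ell|}{r\sqrt{r^2-\ell^2}}\,\d r.
\]

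Then I would evaluate the integral by splitting $a(r) = 1 + \kappa r^2$ into two terms. The constant term yields $2R|\sin\alpha|\int_{r_0}^R \frac{\d r}{r\sqrt{r^2-\ell^2}} = 2\arccos(|\sin\alpha|) = \pi - 2|\alpha|$, reproducing the Euclidean scattering. The curvature term yields $2\kappa R|\sin\alpha|\int_{r_0}^R \frac{r\,\d r}{\sqrt{r^2-\ell^2}} = 2\kappa R|\sin\alpha|\sqrt{R^2-\ell^2} = \kappa R^2|\sin 2\alpha|$, using $\sqrt{R^2 - R^2\sin^2\alpha} = R|\cos\alpha|$. Restricting to $\alpha\in(0,\pi/2)$, where $\ell<0$ so $\mathrm{sign}(\ell) = -1$, gives $\Delta\theta = -(\pi - 2\alpha) - \kappa R^2\sin 2\alpha \equiv \pi + 2\alpha - \kappa R^2\sin 2\alpha \pmod{2\pi}$. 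Matching with \eqref{eq:S}, i.e. $\Delta\theta = \pi + 2\ss(\alpha)$, yields $\ss(\alpha) = \alpha - \tfrac{\kappa R^2}{2}\sin 2\alpha$, and the odd symmetry $\ss(-\alpha) = -\ss(\alpha)$ noted before \eqref{eq:ssgkappa} extends this to all $\alpha$; the boundary values $\ss(0)=0$ and $\ss(\pm\pi/2)=\pm\pi/2$ provide consistency checks. The main obstacle is the bookkeeping of signs and the reduction modulo $2\pi$: correctly relating the fan-beam angle $\alpha$ to the sign of $\ell$, hence to the direction in which $\theta$ advances, and confirming that the integrated $\Delta\theta$ matches the convention $\theta' = \theta + \pi + 2\ss(\alpha)$ rather than differing by a sign. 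The integrals themselves are elementary.
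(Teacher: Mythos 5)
Your proposal is correct and takes essentially the same approach as the paper's proof: both use the Clairaut first integral together with the two-branch (descending/ascending) structure of geodesics on a Herglotz disk to reduce the angular sweep to the elementary integral $\int_{r_0}^{R} \frac{(1+\kappa r^2)\,|\ell|}{r\sqrt{r^2-\ell^2}}\,\d r$, and then match the result against the convention $\theta' = \theta + \pi + 2\ss(\alpha)$. The only differences are cosmetic: the paper imports the reduced ODE and branch structure from \cite[Theorem 2.3.4 and Eq.~(2.30)]{paternain_salo_uhlmann_2023}, works with $\alpha\in(-\pi/2,0)$ instead of $(0,\pi/2)$, and evaluates the integral via the substitution $u=\sqrt{r^2-R^2\sin^2\alpha}$, whereas you derive the separated equation directly from the conserved momentum and unit-speed constraint and split the integrand into two elementary pieces.
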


\begin{proof}[Proof of Lemma \ref{lem:ssgkappa}]
Using Hamiltonian formalism, the geodesic vector field on $SM$ takes the expression
\begin{align}
    X = \frac{\cos\alpha}{1 + \kappa r^2} \partial_r + \frac{\sin\alpha}{r} \partial_\theta - \frac{\sin\alpha}{r(1 + \kappa r^2)} \partial_\alpha,
    \label{eq:X}    
\end{align}
and Clairaut's first integral is $r\sin\alpha$ (i.e., $X(r\sin\alpha) =0$). From the symmetries of $\ss$ discussed above, it is enough to prove \eqref{eq:ssgkappa} for $\alpha\in (-\pi/2,0)$. The latter situation amounts to considering a geodesic arc $(r(t),\theta(t))$ at which $r(0) = R$ and where $\dot{\theta}(0)>0$. By virtue of \cite[Theorem 2.3.4]{paternain_salo_uhlmann_2023}, such an arc consists of two branches, one where $r(t)$ decreases from $R$ to $\rho = -R\sin\alpha$, the other where $r(t)$ increases from $\rho$ to $R$. For such geodesics, if $(\rho,\theta_{\mathrm{v}})$ are the polar coordinates of the vertex point (where $r$ reaches its minimum), \cite[Eq. (2.30)]{paternain_salo_uhlmann_2023} gives the equation (with $b(r) = r$ there) 
\begin{align*}
    \theta(t) = \theta_{\mathrm{v}} \mp \rho \int_\rho^{r(t)} \frac{a(r)}{r} \frac{\d r}{\sqrt{r^2 - \rho^2}},
\end{align*}
where $+/-$ refers to the ascending/descending branch. At the endpoints where $r=R$, we also have $\theta(\tau) = \theta(0) + \pi + 2\ss(\alpha)$. Combining this with the last display, and particularizing to $a(r) = 1+\kappa r^2$, this gives the expression 
\begin{align*}
    \ss(\alpha) = \frac{\theta(\tau)-\theta(0)}{2} - \frac{\pi}{2} = -R \sin \alpha \int_{-R \sin \alpha}^{R} \frac{1 + \kappa r^2}{r \sqrt{r^2 - R^2 \sin^2 \alpha}}\, \d r - \frac{\pi}{2}.
\end{align*}
To compute the last expression, we change variable $u = \sqrt{r^2-R^2\sin^2\alpha}$ to make appear
\begin{align*}
    \ss(\alpha) &= -R\sin\alpha \int_0^{R\cos\alpha} \frac{1+\kappa (u^2+R^2\sin^2\alpha)}{u^2+R^2\sin^2\alpha}\ \d u - \frac{\pi}{2} \\
    &= -R\sin\alpha \left[ \frac{1}{R\sin\alpha} \tan^{-1} \left( \frac{u}{R\sin\alpha} \right) +\kappa   \right]_{u=0}^{u=R\cos\alpha} - \frac{\pi}{2} \\
    &= -\tan^{-1} \cot \alpha - \frac{\pi}{2} - \frac{\kappa R^2}{2} \sin(2\alpha) \\
    &= \alpha - \frac{\kappa R^2}{2} \sin(2\alpha),
\end{align*} 
and the result follows.
\end{proof}

\subsection{A characterization of simplicity for Herglotz disks} \label{sec:characsimple}

A convex, non-trapping Riemannian surface is called {\bf simple} if and only if it has no conjugate points. A pair  of points $p,q\in M$ is conjugate along the geodesic $\gamma (t)$ whenever there exists $t_0<t_1$ such that $p = \gamma(t_0)$, $q = \gamma(t_1)$ and there exists a nontrivial solution $j(t)$ of the scalar Jacobi equation 
\begin{align}
    \ddot j(t) + K (\gamma(t)) j(t) = 0, \quad j(t_0)=j(t_1) = 0, \qquad \left(\dot j \equiv \frac{dj}{dt}\right)
    \label{eq:jacobi}
\end{align}
where $K$ is the Gaussian curvature of $(M,g)$. Our aim is to show that the simplicity of Herglotz disks can be tested directly from the scattering function $\ss(\alpha)$. 

\begin{proposition}\label{prop:characsimplicity}
Let \( (\Dm_R,g) \) be a Herglotz disk with $g$ of the form \eqref{eq:ametric}. Then \( (\Dm_R,g) \) is simple if and only if the scattering function $\ss(\alpha)$ satisfies \( \ss'(\alpha) >0 \) for all $\alpha\in [-\pi/2,\pi/2]$.
\end{proposition}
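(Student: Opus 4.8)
The plan is to connect conjugate points to the monotonicity of the scattering function by thinking about the geodesic flow's behavior in fan-beam coordinates. First I would recall the standard fact that a convex non-trapping surface is simple (has no conjugate points) if and only if the scattering relation $S$ is a diffeomorphism whose differential behaves well; more concretely, the presence of a conjugate pair along a geodesic is equivalent to the geodesic exponential map becoming singular, which manifests at the boundary as a degeneracy of the scattering relation. Since by \eqref{eq:S} the only nontrivial part of $S$ is $\theta'(\theta,\alpha)=\theta+\pi+2\ss(\alpha)$, the Jacobian of $S$ in the $(\theta,\alpha)$ coordinates reduces (up to the trivial $\alpha\mapsto\pi-\alpha$ factor) to $\partial\theta'/\partial\alpha = 2\ss'(\alpha)$. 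Thus the geometric claim is that $\ss'(\alpha)>0$ for all $\alpha\in[-\pi/2,\pi/2]$ exactly captures the absence of conjugate points.

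The cleanest route I would take is via Jacobi fields and Clairaut's integral directly, rather than through the abstract scattering-relation argument. Using \eqref{eq:X} and the Clairaut integral $r\sin\alpha$, I would parametrize a geodesic launched from the boundary by its Clairaut constant $\rho=-R\sin\alpha$, so that $\alpha$ indexes the family of geodesics through a fixed boundary point. Conjugate points along a geodesic correspond precisely to vanishing of a nontrivial Jacobi field $j$ solving \eqref{eq:jacobi} with $j(t_0)=j(t_1)=0$. I would set up the Jacobi field as the $\alpha$-derivative of the geodesic family at fixed endpoint (the variation field of geodesics sharing one endpoint), so that the vanishing of $j$ at a second point is equivalent to infinitesimal focusing. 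The key computation is then to relate the ``spreading'' of this variation field to $\ss'(\alpha)$: differentiating the angular integral giving $\theta(\tau)-\theta(0) = \pi + 2\ss(\alpha)$ with respect to $\alpha$ should produce an integral expression whose sign is controlled by $\ss'(\alpha)$, and whose vanishing signals a conjugate point on the boundary geodesic. Since every geodesic of a Herglotz disk hits the boundary (non-trapping), and conjugate points, once they appear in the interior, can be pushed to the boundary by monotonicity/continuity arguments along the rotation-invariant family, it suffices to detect focusing for boundary-to-boundary geodesics.

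Concretely, I would argue both directions. For the forward direction (simple $\Rightarrow \ss'>0$): if $\ss'(\alpha_0)\le 0$ for some $\alpha_0\in[-\pi/2,\pi/2]$, then since $\ss(0)=0$, $\ss(\pm\pi/2)=\pm\pi/2$ and $\ss$ is smooth and odd, a non-positive derivative forces the variation field of the corresponding geodesic family to vanish at a second boundary point (or a limiting interior point), producing a conjugate pair and contradicting simplicity. For the converse ($\ss'>0 \Rightarrow$ simple): strict positivity of $\ss'$ on $[-\pi/2,\pi/2]$ makes the map $\alpha\mapsto\theta'(\alpha)$ strictly monotone, so geodesics from a fixed boundary point fan out without refocusing, and the variation field never vanishes a second time; by the rotation-invariant structure this rules out conjugate points everywhere in $\Dm_R$. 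The technical heart is to make the identification between the scalar Jacobi field in \eqref{eq:jacobi} and the $\alpha$-derivative of the angular sweep $\theta(\tau;\alpha)$ fully rigorous, including controlling the (mild) singularity of the defining integral at the vertex $r=\rho$.

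The main obstacle I anticipate is precisely this last identification: showing that the vanishing of a genuine interior-to-interior Jacobi field is equivalent to, and not merely implied by, the boundary condition $\ss'(\alpha)\le 0$. One must verify that on a rotation-invariant disk the ``first'' conjugate point along any geodesic occurs no later than its exit, so that testing focusing only at the boundary loses no information; this requires a monotonicity argument for the Jacobi equation under the curvature $K(r)=\frac{2\kappa}{(1+\kappa r^2)^3}$ (or the general $K$ coming from \eqref{eq:tildea}) combined with the Clairaut symmetry that makes the descending and ascending branches mirror images. Handling the improper integral at the turning point $r=\rho$ — where $\sqrt{r^2-\rho^2}$ vanishes — when differentiating in $\alpha$ is the delicate analytic step; I would regularize via the substitution $u=\sqrt{r^2-\rho^2}$ used already in the proof of Lemma \ref{lem:ssgkappa} to confirm that the differentiated integral converges and that its sign is exactly that of $\ss'(\alpha)$.
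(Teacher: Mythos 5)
Your first step is sound and coincides with the paper's key lemma: the paper proves the exact identity $\ss'(\alpha)=\frac{b(\alpha,\tau(\alpha))}{2R\cos\alpha}$ (equation \eqref{eq:sprime}), where $b(\alpha,\cdot)$ is the Jacobi field along $\gamma_{Re^{i\theta},v(\alpha)}$ with $b(\alpha,0)=0$, $\dot b(\alpha,0)=1$; it does so by comparing the differential $dS(V)$ computed from \eqref{eq:S} against the general formula for $dS(V)$ in terms of $(b,\dot b)$ from \cite[Lemma A.1]{monard2023sampling}, rather than by differentiating the angular-sweep integral as you propose, but these are equivalent routes to the same identity. With this in hand, the direction ``not $\ss'>0$ everywhere $\Rightarrow$ not simple'' follows as in the paper: $b(\alpha,\tau(\alpha))>0$ for $\alpha$ near $\pm\pi/2$ (short geodesics), so if $\ss'(\alpha_0)\le 0$ somewhere, the intermediate value theorem produces $\alpha_0$ with $b(\alpha_0,\tau(\alpha_0))=0$, i.e.\ a conjugate pair of boundary points. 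Your sketch of this direction, while vaguer, is essentially correct.

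The genuine gap is in the converse, which you yourself flag as ``the main obstacle'' but then do not close. Your argument --- strict monotonicity of $\alpha\mapsto\theta'(\alpha)$ means ``geodesics fan out without refocusing, and the variation field never vanishes a second time'' --- does not follow: positivity of $b$ at the \emph{exit time} for every $\alpha$ is a priori compatible with $b(\alpha,\cdot)$ vanishing an even number of times in the interior $(0,\tau(\alpha))$ (two sign changes restore positivity at exit), so $\ss'>0$ everywhere does not by itself preclude interior focusing. Moreover, a conjugate pair $(p,q)$ in the interior need not lie along a geodesic through the launch point with a Jacobi field vanishing at $t=0$, so its Jacobi field $j$ in \eqref{eq:jacobi} is not $b$ at all. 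The paper needs two further ingredients that are absent from your proposal: (i) Sturm's separation theorem, which converts any nontrivial $j$ vanishing at $t_0<t_1$ into a zero $t^*\le\tau(\alpha_0)$ of $b(\alpha_0,\cdot)$; and (ii) a continuation argument on the zero set of $b$: since $\dot b\ne 0$ wherever $b=0$, the implicit function theorem makes this zero set a curve $t(\alpha)$, which is bounded away from $t=0$ (injectivity radius) and therefore, because $\tau(\alpha)\to 0$ as $\alpha\to\pm\pi/2$, must in its maximal extension cross the exit graph at some $(\alpha_1,\tau(\alpha_1))$, forcing $\ss'(\alpha_1)=0$. The tools you propose instead --- a ``monotonicity argument for the Jacobi equation under the curvature'' and the claim that ``the first conjugate point occurs no later than exit'' --- do not substitute for this: the curvature of a general Herglotz disk \eqref{eq:ametric} has no sign, so no such monotonicity is available, and the statement actually required is not that conjugate points occur before exit (trivially true) but that the existence of \emph{any} interior conjugate pair forces \emph{some} geodesic to have conjugate \emph{endpoints} on the boundary, which is exactly what (i)+(ii) deliver.
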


To prove Proposition \ref{prop:characsimplicity}, we introduce the Jacobi function $b$ defined on 
\begin{align}
    \{(\alpha,t), \quad \alpha\in [-\pi/2,\pi/2],\ t\in [0,\tau(\alpha)] \},
\end{align}
unique solution to 
\begin{align}
    \ddot{b} + K(\gamma_{Re^{i\theta}, v(\alpha)}) b = 0, \quad b(\alpha,0) = 0, \quad \dot b(\alpha,0) = 1,
\end{align}
a quantity independent of $\theta$. By convexity of $\partial M$, $\lim_{\alpha\to \pm \pi/2} \tau(\alpha) = 0$, and for all $\alpha$ such that $\tau(\alpha)$ less than the radius of injectivity of $(M,g)$, one must have $b(\alpha,t)>0$ for all $t\in (0,\tau(\alpha)]$.

We now explain how to express $\ss'(\alpha)$ in terms of the function $b$.
\begin{lemma}
Let $g$ be the form (\ref{eq:ametric}) on the disk $\mathbb{D}_{R}$. Then the scattering function satisfies:
\begin{align}
    \ss'(\alpha)=\frac{b(\alpha,\tau(\alpha))}{2R\cos\alpha}.
    \label{eq:sprime}
\end{align}
\end{lemma}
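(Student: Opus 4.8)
The plan is to read $\ss'(\alpha)$ as a first-variation (Jacobi field) quantity for the family of geodesics issuing from the boundary, and to match it against the scalar normal Jacobi field $b$. Fix $\theta$ and regard $\alpha \mapsto \gamma_\alpha := \gamma_{Re^{i\theta}, v(\alpha)}$ as a smooth one-parameter family of unit-speed geodesics, smoothness in $\alpha$ (and of $\tau(\alpha)$) being guaranteed by strict convexity and non-trapping away from the glancing angles $\alpha = \pm\pi/2$. The variation field $J(t) := \partial_\alpha \gamma_\alpha(t)$ is then a Jacobi field along $\gamma_\alpha$, and the whole proof is the identification of $J$ at the exit with $b(\alpha,\tau(\alpha))$.

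First I would identify $J$ with $b$. Since the initial point $\gamma_\alpha(0) = Re^{i\theta}$ is independent of $\alpha$, we have $J(0) = 0$; and since $\gamma_\alpha$ is unit speed, $\dot J(0) = \partial_\alpha v(\alpha) = \sin\alpha\,\be_r - \cos\alpha\,\be_\theta$ is orthogonal to $v(\alpha)$ with $|\dot J(0)| = 1$. In two dimensions a Jacobi field with purely normal initial data stays normal, so $J(t) = b(\alpha,t)\, n(t)$, where $n(t)$ is the unit normal obtained by rotating $\dot\gamma_\alpha(t)$ by $+\pi/2$ (the rotation $\be_r \mapsto \be_\theta,\ \be_\theta\mapsto -\be_r$ sends $v(\alpha)$ to $\dot J(0)$, fixing the orientation), and $b$ is exactly the scalar solving the Jacobi equation with $b(\alpha,0)=0$, $\dot b(\alpha,0)=1$.

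Next I would differentiate the boundary constraint. The exit point $E(\alpha) := \gamma_\alpha(\tau(\alpha))$ lies on $\partial\Dm_R$, so $\tfrac{dE}{d\alpha}$ is tangent to the boundary circle. Writing $\tfrac{dE}{d\alpha} = J(\tau) + \tau'(\alpha)\,\dot\gamma_\alpha(\tau)$ and pairing with $n(\tau)$, the transversal term $\tau'(\alpha)\dot\gamma_\alpha(\tau)$ drops out (it is tangent to the geodesic, hence orthogonal to $n(\tau)$), leaving $\langle \tfrac{dE}{d\alpha}, n(\tau)\rangle = b(\alpha,\tau(\alpha))$. On the other hand, $E(\alpha) = Re^{i\theta'(\alpha)}$ with $\theta'(\alpha) = \theta + \pi + 2\ss(\alpha)$, so $\tfrac{dE}{d\alpha} = 2\ss'(\alpha)\,\partial_\theta = 2R\ss'(\alpha)\,\be_\theta$. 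Using the exit velocity $\dot\gamma_\alpha(\tau) = v(\pi-\alpha) = \cos\alpha\,\be_r - \sin\alpha\,\be_\theta$, the rotated normal is $n(\tau) = \sin\alpha\,\be_r + \cos\alpha\,\be_\theta$, hence $\langle \be_\theta, n(\tau)\rangle = \cos\alpha$. Equating the two expressions gives $b(\alpha,\tau(\alpha)) = 2R\cos\alpha\,\ss'(\alpha)$, which is \eqref{eq:sprime}.

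I expect the main obstacle to be the bookkeeping at the boundary rather than any deep difficulty: fixing a single orientation convention for $n$ so that $n(0)$ agrees with $\dot J(0)$ and then tracking its sign consistently through to $n(\tau)$, and carefully justifying that the reparameterization term $\tau'(\alpha)\dot\gamma_\alpha(\tau)$ really is tangential so that it is annihilated by the pairing with $n(\tau)$. A secondary point is the behavior at the glancing angles $\alpha = \pm\pi/2$, where $\tau(\alpha)\to 0$, $\cos\alpha\to 0$ and $b(\alpha,\tau(\alpha))\to 0$ simultaneously; there \eqref{eq:sprime} is to be read as the identity valid on $(-\pi/2,\pi/2)$ and extended by continuity, consistent with the established boundary values $\ss(\pm\pi/2)=\pm\pi/2$.
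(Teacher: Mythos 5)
Your proof is correct, and it takes a genuinely different route from the paper's. The paper works entirely on $\partial SM$: it uses the framing $\{V, H\}$ of $T(\partial SM)$, computes $dS(V)$ once from the explicit reduced form \eqref{eq:S} of the scattering relation (giving the $-2\ss'(\alpha)\partial_\theta$ term), and once by invoking Lemma A.1 of the sampling paper \cite{monard2023sampling}, which expresses $dS(V)$ in terms of $b$ and $\dot b$ in the $\{V,H\}$ frame; comparing $\partial_\theta$ components yields \eqref{eq:sprime}. You instead re-derive the content of that cited lemma from scratch in this special case: you identify the variation field $J(t)=\partial_\alpha\gamma_\alpha(t)$ as a Jacobi field, show it is purely normal with $J = b\,n$ for the parallel unit normal $n = \iota\dot\gamma_\alpha$ (your orientation bookkeeping, $\dot J(0)=n(0)$, is what guarantees the scalar factor is exactly the paper's $b$ with $\dot b(\alpha,0)=1$ rather than $-b$), and then differentiate the exit-point constraint $E(\alpha)=Re^{i\theta'(\alpha)}\in\partial\Dm_R$, killing the reparameterization term $\tau'(\alpha)\dot\gamma_\alpha(\tau)$ by pairing with $n(\tau)$. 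Your computation of the exit velocity $v(\pi-\alpha)$, of $n(\tau)=\sin\alpha\,\be_r+\cos\alpha\,\be_\theta$, and of $\langle\be_\theta,n(\tau)\rangle=\cos\alpha$ all check out (and are consistent with the Euclidean sanity check $b=t$, $\tau=2R\cos\alpha$, $\ss'\equiv 1$). What the paper's route buys is brevity and uniformity with the fan-beam formalism used elsewhere (it also hands you the $\partial_\alpha$ component, i.e.\ information on $\dot b$ at exit, for free); what your route buys is self-containedness — no external citation — and a transparent geometric interpretation of \eqref{eq:sprime} as a first-variation formula for the exit point. Your closing remark about the glancing angles is also apt: the identity is proved on $(-\pi/2,\pi/2)$ and extends by continuity, a point the paper passes over silently.
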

\begin{proof}
    Recalling that $\mu(x,v) = g_x(v,\nu_x)$ for $(x,v)\in \partial SM$, a framing of $T(\partial SM)$ is given by $\{V,H:= (V\mu)X + \mu [X,V]\}$. In fan-beam coordinates $(\theta,\alpha)$ where $\mu = \cos\alpha$, this reads 
    \begin{align*}
        V = \partial_\alpha, \qquad H =-\sin\alpha X+\cos\alpha [X,V] =-\frac{1}{R}\partial_{\theta}+\frac{1}{R a(R)}\partial_\alpha.
    \end{align*}
    From \eqref{eq:S}, it follows that 
    \begin{align*} 
        dS(V) = -2\ss'(\alpha)\partial_\theta+\partial_\alpha.
    \end{align*}
    On the other hand, \cite[Lemma A.1]{monard2023sampling} gives
    \begin{align*}
        dS(V)=\frac{-b(x,v,t)}{\mu(x,v)\circ S} H+\dot{b}(
    x,v,t)V, \qquad (x,v)\in \partial SM.        
    \end{align*}
    In the coordinates $(\theta,\alpha)$, this gives
    \begin{align*}
        dS(V)=\frac{-b(\alpha,\tau(\alpha))}{R\cos\alpha}\partial_\theta+\left(\frac{b(\alpha,\tau(\alpha))}{R a(R) \cos\alpha}+ \dot{b}(\alpha,\tau(\alpha))\right)\partial_\alpha.
    \end{align*}
    Comparing the $\partial_\theta$ components of both expressions for $dS(V)$, the result follows. 
\end{proof}

We are now ready to prove Proposition \ref{prop:characsimplicity}.

\begin{proof}[Proof of Proposition \ref{prop:characsimplicity}]
\textbf{(\(\implies\))} Suppose there exists \( \alpha_0 \) such that \( \ss'(\alpha_0) \leq 0 \).  
Since \( \ss'(\alpha) \) is continuous in \( \alpha \) and we know that \( b(\alpha, \tau(\alpha)) > 0 \) for \( \alpha \) close to \( \pm\frac{\pi}{2} \), by the intermediate value theorem, there must exist some \( \alpha_0 \in (-\frac{\pi}{2}, \frac{\pi}{2}) \) such that $\ss'(\alpha_0) =0$, or equivalently, $b(\alpha_0, \tau(\alpha_0)) = 0$, and hence $p=R e^{i 0}$ and $q = R e^{i(\pi + 2\ss(\alpha_0))}$ are conjugate. 

\medskip

\textbf{(\(\impliedby\))} Suppose there exists a pair of conjugate points $(p,q)$ in $M$. By rotation invariance, we may choose $p,q$ to be conjugate along a geodesic $\gamma(t)$ such that $\gamma(0) = R e^{i 0}$, $\gamma(t_0) = p$ and $\gamma(t_1) = q$ for some $t_0,t_1$ satisfying $0\le t_0<t_1$, $\dot{\gamma}(0) = v(\alpha_0)$ for $\alpha_0\in (-\pi/2,\pi/2)$. Then there is a nontrivial solution $j(t)$ of \eqref{eq:jacobi} vanishing at $t_0,t_1$. If $t_0 =0$, then $b(t_1,\alpha_0) = 0$ and we set $t^* = t_1$. If $t_0\ne 0$, since $j(t)$ and $b(t,\alpha_0)$ both solve the same Jacobi equation, by Sturm's Separation Theorem, there exists $t^* \in (t_0,t_1)$ such that $b(\alpha_0,t^*) = 0$. 

At this point, we have proved that there exists $\alpha_0 \in (-\pi/2,\pi/2)$ and $0<t^* \le \tau(\alpha_0)$ such that $b(\alpha_0,t^*)=0$. Now if $t^* = \tau(\alpha_0)$, then $\ss'(\alpha_0) =0$ and we are done. If $t^*< \tau(\alpha_0)$, we now analyze the structure of the zero set of $b(\alpha,t)$ to prove that there is $\alpha_1$ such that $b(\alpha_1,\tau(\alpha_1)) =0$ so that $\ss'(\alpha_1) = 0$. Since $b(\alpha,\cdot)$ is a non-trivial solution of a second-order linear homogeneous ODE in $t$, one must have $\partial b/\partial t \ne 0$ whenever $b(\alpha,t) =0$. Hence by the implicit function theorem, there is a function $t(\alpha)$ defined in a neighborhood of $\alpha_0$ such that $t(\alpha_0) = t^*$ and $b(\alpha, t(\alpha)) = 0$. One must also satisfy that $t(\alpha)$ must be greater than the radius of injectivity of $(M,g)$. Since $\lim_{\alpha\to \pm \pi/2} \tau(\alpha) = 0$, the maximal extension of $t(\alpha)$ must encounter a point of the form $(\alpha_1,\tau(\alpha_1))$ with $\tau(\alpha_1) >0$. Hence we have found $\alpha_1$ with $\tau(\alpha_1)>0$ such that $b(\alpha_1,\tau(\alpha_1))=0$ and hence $\ss'(\alpha_1) =0$. Proposition \ref{prop:characsimplicity} is proved. 
\end{proof}

Combining Proposition \ref{prop:characsimplicity} with \eqref{eq:ssgkappa}, we deduce: 

\begin{corollary}\label{cor:gkappa} Fix $R>0$.

If $|\kappa|R^2<1$, then $(\Dm_R,g_\kappa)$ defined in \eqref{eq:g} is simple.

If $\kappa \ge \frac{1}{R^2}$, then $(\Dm_R,g_\kappa)$ has conjugate points. 
\end{corollary}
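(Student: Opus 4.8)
The plan is to reduce the entire statement to a one-variable analysis of the explicit scattering function. First I would differentiate \eqref{eq:ssgkappa} to get
\[
\ss'(\alpha) = 1 - \kappa R^2 \cos(2\alpha),
\]
which is all the data needed, because Proposition \ref{prop:characsimplicity} characterizes simplicity of a Herglotz disk purely through the sign of $\ss'$ on $[-\pi/2,\pi/2]$. Since every $(\Dm_R, g_\kappa)$ is a Herglotz disk, it is automatically convex and non-trapping, so here ``simple'' is synonymous with ``no conjugate points,'' and failure of simplicity is exactly the presence of conjugate points.

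For the first assertion I would bound $\ss'$ from below uniformly. As $\alpha$ ranges over $[-\pi/2,\pi/2]$ the quantity $\cos(2\alpha)$ ranges over $[-1,1]$, so $|\kappa R^2 \cos(2\alpha)| \le |\kappa| R^2$ and hence $\ss'(\alpha) \ge 1 - |\kappa| R^2$. When $|\kappa| R^2 < 1$ this lower bound is strictly positive, giving $\ss'(\alpha) > 0$ for all $\alpha$, and Proposition \ref{prop:characsimplicity} then yields simplicity. (The hypothesis $\kappa > -1/R^2$ of Theorem \ref{thm:main} already keeps the metric coefficient $1 + \kappa r^2$ positive on $[0,R]$, so the metric is genuinely defined.)

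For the second assertion it suffices to exhibit one bad angle. Evaluating at $\alpha = 0$ gives $\ss'(0) = 1 - \kappa R^2$, and when $\kappa \ge 1/R^2$, i.e. $\kappa R^2 \ge 1$, this is $\le 0$. Thus $\ss'$ is not everywhere positive, so by Proposition \ref{prop:characsimplicity} the disk is not simple, i.e. it has conjugate points. The borderline case $\kappa = 1/R^2$ merits a one-line remark: there $\ss'(0) = 0$, and since $\cos 0 = 1$ the identity \eqref{eq:sprime} forces $b(0,\tau(0)) = 0$, so the boundary points $p = Re^{i0}$ and $q = Re^{i\pi}$ are conjugate along the diametral geodesic—consistent with the threshold $\kappa = 1$ displayed on Figure \ref{fig:gkappa} for $R = 1$.

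There is no genuine obstacle: the content lives entirely in the already-established Proposition \ref{prop:characsimplicity} and Lemma \ref{lem:ssgkappa}. The only point needing a little care is the dependence on the sign of $\kappa$: for $\kappa > 0$ the minimum of $\ss'$ occurs at $\alpha = 0$ (where $\cos 2\alpha = 1$), whereas for $\kappa < 0$ it occurs at $\alpha = \pm\pi/2$ (where $\cos 2\alpha = -1$). The uniform estimate $\ss'(\alpha) \ge 1 - |\kappa| R^2$ handles both signs at once, which is precisely why the first assertion is naturally phrased with $|\kappa|$ while the second, concerning only the positive regime past the simplicity threshold, is phrased with $\kappa$.
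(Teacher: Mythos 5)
Your proposal is correct and follows essentially the same route as the paper's own proof, which simply differentiates \eqref{eq:ssgkappa} to get $\ss'(\alpha) = 1 - \kappa R^2\cos(2\alpha)$ and observes that the positivity condition of Proposition \ref{prop:characsimplicity} holds if and only if $|\kappa|R^2 < 1$. Your additional remarks (the uniform bound $\ss'(\alpha)\ge 1-|\kappa|R^2$, the location of the minimum depending on the sign of $\kappa$, and the identification via \eqref{eq:sprime} of the conjugate boundary points $Re^{i0}$, $Re^{i\pi}$ at the threshold $\kappa R^2=1$) are all accurate elaborations of the same argument.
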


\begin{proof}
    Equation \eqref{eq:ssgkappa} implies that $\ss'(\alpha) = 1 - \kappa R^2 \cos(2\alpha)$, hence the condition ``$\ss'(\alpha)>0$ for all $\alpha\in [-\pi/2,\pi/2]$" needed in Proposition \ref{prop:characsimplicity} is satisfied if and only if $|\kappa|R^2<1$.
\end{proof}

\section{Transport Twistor space and the distribution $\DD$} \label{sec:TTspace}

\subsection{Coordinate-invariant construction of $Z$}

We now recall the construction of the TT space of an oriented Riemannian surface $(M,g)$. 

The metric and orientation give rise to a unique almost-complex structure, i.e. a bundle map $\iota\colon TM\to TM$ satisfying $\iota^2 = -id$. For $v\in TM$, $\iota v$ is the unique tangent vector such that $g(\iota v,\iota v) = g(v,v)$, $g(\iota v, v) = 0$ and $(v,\iota v)$ is oriented. The almost-complex structure allows us to define a complex multiplication 
\begin{align}
    (x,\omega\cdot v) := (x,\text{Re}(\omega)v + \text{Im}(\omega) \iota v), \qquad (x,v)\in TM, \quad \omega\in \Cm,
    \label{eq:complexmult}
\end{align}
which also gives rise to the usual circle action on the fibers of $SM$ when $|\omega|=1$. Recalling that $Z=BM$, a way to construct the involutive structure $\DD\subset T_\Cm Z$ in a coordinate-invariant way, is as the pushdown $p_\star \widehat{\DD}$ of a distribution defined on the total space $SM\times \Dm$ of the principal $\Sm^1$-bundle 
\begin{align}
    \pp\colon SM \times \Dm \to BM, \qquad (x,v,\omega)\mapsto (x,\omega\cdot v),
    \label{eq:p}
\end{align}
with fibers given by the orbits of the circle action
\begin{align}
    \psi_t (x,v,\omega) = (x, e^{it}\cdot v, e^{-it}\omega), \quad (x,v)\in SM, \quad t\in \Rm/2\pi\Zm.
    \label{eq:circleZ}
\end{align}
If $X$ denotes the generator of the geodesic flow on $SM$, and $V$ denotes the generator of the circle action on the fibers of $SM$, we recall the Guillemin-Kazhdan operators $\eta_\pm := \frac{1}{2}(X\pm i [X,V])\in C^\infty(M; T_\Cm (SM))$. Then the distribution $\widehat{\DD}$ is given by 
\begin{align}
    \widehat{\DD} = \text{span} \{ \eta_- + \omega^2 \eta_+,\ \partial_{\overline{\omega}}  \}.
\end{align}
It is then proved in \cite[Lemma 4.1]{bohr2021transport} that $\widehat{\DD}$ is $\Sm^1$-invariant and as such descends to a distribution $\DD = \pp^* \widehat{\DD}$ on $Z$, satisfying the properties (a)-(b)-(c) of Section \ref{sec:prelims}.

\subsection{The distribution in polar coordinates}

The polar coordinate chart is a diffeomorphism $\wtM := (0,R]_r \times (\Rm/2\pi\Zm)_\theta \to M\backslash\{0\}$. On $(\widetilde{M}, P^* g_\kappa)$, an oriented orthonormal frame of $T \wtM$ is given by $(\be_r, \be_\theta)$, where $\be_r = \frac{1}{1+\kappa r^2} \partial_r$ and $\be_\theta = \frac{1}{r}\be_\theta$, and the circle action applied to $\be_r$ gives rise to the global chart 
\begin{align*}
    \wtM\times \Rm/2\pi\Zm\ni (r,\theta,\alpha) \mapsto (r,\theta, \cos\alpha\ \be_r + \sin \alpha\ \be_\theta) \in S\wtM,
\end{align*}
where $V = \partial_\alpha$. In these coordinates, the almost-complex structure of $M$ is given by 
\begin{align}
    \Cm (\be_r + i\be_\theta) = \Cm \Big(\partial_{\bar{z}} - \frac{\kappa z^2}{2+\kappa z\zbar}\partial_z\Big) \qquad (z = re^{i\theta}),
    \label{eq:acstruct}
\end{align}
and the geodesic vector field takes the expression \eqref{eq:X}, from which we can derive an expression for the Guillemin-Kazhdan operators: 
\begin{align*}
    \eta_\pm = \frac{1}{2}(X \pm i [X,V]) = \frac{e^{\pm i\alpha}}{2} \left( \be_r \mp i \be_\theta \pm \frac{i}{r(1+\kappa r^2) }\partial_\alpha\right).
\end{align*}
The distribution $\widehat{\DD}$ on $S\wtM_{(r,\theta,\alpha)}\times \Dm_\omega$ is thus given by $\widehat{\DD} = \text{span} \{ \partial_{\bar{\omega}}, \widehat{\Xi}\}$, where
\begin{align*}
    \widehat{\Xi} = \eta_- + \omega^2 \eta_+ = \frac{1}{2} \left( (e^{-i\alpha} + \omega^2 e^{i\alpha}) \be_r + i (e^{-i\alpha}-\omega^2 e^{i\alpha}) \be_\theta + \frac{i(-e^{-i\alpha}+\omega^2 e^{i\alpha})}{r(1+\kappa r^2)} \partial_\alpha   \right).
\end{align*}
To push $\widehat{\DD}$ down to $B\wtM$, we use the chart
\begin{align}
\widetilde{\Phi}\colon \wtM\times \Dm \ni (r,\theta,\mu) \mapsto (r,\theta, \text{Re}(\mu) \be_r + \text{Im}(\mu) \be_\theta),
\label{eq:chart2}
\end{align}
and notice that $\widetilde{\Phi}(r,\theta,\mu) = \pp (r,\theta,0,\mu)$ where $\pp$ is defined in \eqref{eq:p}. In particular, we have that $\pp_* \partial_r = \partial_r$, $\pp_* \partial_\theta = \partial_\theta$, $\pp_* (\partial_\omega) = \partial_\mu$ and $\pp_* (\partial_{\bar{\omega}}) = \partial_{\bar{\mu}}$ above a point $(r,\theta,0,\omega)$. Finally, to compute $\pp_* \partial_\alpha$, we use that $\pp_*$ annihilates the infinitesimal generator of \eqref{eq:circleZ} given by $\partial_\alpha - i (\omega\partial_\omega- \bar{\omega}\partial_{\bar{\omega}})$, hence $\pp_*(\partial_\alpha) = \pp_* (i (\omega\partial_\omega- \bar{\omega}\partial_{\bar{\omega}})) = i (\mu\partial_\mu- \bar{\mu}\partial_{\bar{\mu}})$.

As a conclusion, the involutive structure of $B\wtM$ looks like $\widetilde{\DD} = \Cm\partial_{\bar\mu} \oplus \Cm\widetilde{\Xi}$, where
\begin{align}
\widetilde{\Xi} := \pp_* (\widehat{\Xi}|_{(r,\theta,0,\mu)}) = \frac{1}{2}\left(\frac{(\mu^2 + 1)}{1+\kappa r^2}\partial_r + \frac{i(1 - \mu^2)}{r} \partial_\theta + \frac{1 - \mu^2}{r(1+\kappa r^2)} \left( \mu \partial_\mu - \bar{\mu} \partial_{\bar{\mu}} \right)\right).
\label{eq:Dpolar}
\end{align}

\subsection{Smooth coordinates on $Z$ down to the origin}\label{sec:coordZ}

The discussion in this section applies to any disk $\Dm_R$ equipped with a rotation-invariant metric of the form \eqref{eq:ametric} for some positive function $a(r)$ satisfying \eqref{eq:tildea}. We denote by $(\be_r,\be_\theta) = (\frac{1}{a}\partial_r,\frac{1}{r}\partial_\theta)$ an oriented orthonormal frame of $P^* g$. Although $(P_* \be_r,P_* \be_\theta)$ does not extend to the origin of $M$, we have the following. 
\begin{lemma}\label{lem:be}
    The section 
    \begin{align*}
        \be := P_* (\cos \theta \be_r - \sin\theta\be_\theta),
    \end{align*}
    originally defined and smooth on $M\backslash \{0\}$, extends to a smooth section of $SM$.
\end{lemma}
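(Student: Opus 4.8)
The plan is to compute $\be$ directly in the global Cartesian chart $(x,y)$ and verify that its two component functions extend smoothly across $r=0$. First I would push the orthonormal frame forward under $P(r,\theta)=(r\cos\theta,r\sin\theta)$: since $P_*\partial_r = \cos\theta\,\partial_x + \sin\theta\,\partial_y$ and $P_*\partial_\theta = -r\sin\theta\,\partial_x + r\cos\theta\,\partial_y$, one gets
\[
P_*\be_r = \tfrac{1}{a}(\cos\theta\,\partial_x + \sin\theta\,\partial_y), \qquad P_*\be_\theta = -\sin\theta\,\partial_x + \cos\theta\,\partial_y .
\]
Substituting into $\be = \cos\theta\,P_*\be_r - \sin\theta\,P_*\be_\theta$ and collecting terms, the coefficients of $\partial_x$ and $\partial_y$ come out to $\tfrac{\cos^2\theta}{a}+\sin^2\theta$ and $\cos\theta\sin\theta\big(\tfrac1a-1\big)$ respectively. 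A useful sanity check guiding the whole computation is the Euclidean case $a\equiv 1$, where these reduce to $1$ and $0$, i.e. $\be=\partial_x$; the metric merely deforms this constant field.

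Next I would isolate the non-Euclidean part. Writing $\tfrac{\cos^2\theta}{a}+\sin^2\theta = 1 + \cos^2\theta\big(\tfrac1a-1\big)$ and invoking the structural form $a = 1 + r^2\tilde a(r^2)$ from \eqref{eq:tildea}, so that $\tfrac1a - 1 = -\tfrac{r^2\tilde a(r^2)}{a}$, the two coefficients become
\[
1 - \frac{r^2\cos^2\theta\,\tilde a(r^2)}{a}, \qquad -\frac{r^2\cos\theta\sin\theta\,\tilde a(r^2)}{a}.
\]
The crucial observation --- the only place where anything must actually be checked at the origin --- is that the apparently singular angular factors pair up with $r^2$ into genuine polynomials: $r^2\cos^2\theta = x^2$, $\ r^2\cos\theta\sin\theta = xy$, and $r^2 = x^2+y^2$. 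Hence
\[
\be = \Big(1 - \frac{x^2\,\tilde a(x^2+y^2)}{a}\Big)\partial_x - \frac{xy\,\tilde a(x^2+y^2)}{a}\,\partial_y , \qquad a = 1 + (x^2+y^2)\tilde a(x^2+y^2).
\]

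Finally I would conclude smoothness and the membership $\be\in SM$. Since $\tilde a \in C^\infty([0,R^2])$ precomposed with the smooth map $(x,y)\mapsto x^2+y^2$ is smooth on $M$, and $a>0$ is bounded away from $0$, both coefficient functions above are smooth on all of $M$, including the origin, where $\be = \partial_x$. That $\be$ takes values in $SM$ is automatic away from $0$: the vector $\cos\theta\,\be_r - \sin\theta\,\be_\theta$ is $P^*g$-unit because $(\be_r,\be_\theta)$ is orthonormal, and $P$ is an isometry onto $M\setminus\{0\}$, so $|\be|_g\equiv 1$ there; by continuity this persists at the origin, consistent with $|\partial_x|_g = 1$ since $a(0)=1$. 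I do not expect a genuine obstacle: the entire content is the $r^2$-cancellation of the angular singularity, which is precisely why the specific combination $\cos\theta\,\be_r - \sin\theta\,\be_\theta$ (rather than an arbitrary unit combination) must be chosen.
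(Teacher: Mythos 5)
Your proof is correct and follows essentially the same route as the paper's: compute $\be$ in Cartesian coordinates, use the structural form \eqref{eq:tildea} of $a$ to turn the angular factors into the polynomials $x^2$, $xy$, and $x^2+y^2$, and conclude smoothness (the unit-norm check you add is left implicit in the paper). In fact your coefficient $1 - x^2\tilde a(x^2+y^2)/a$ of $\partial_x$ is the accurate one; the paper's display puts the overall prefactor $1/a$ on the constant term as well, an inconsequential slip that does not affect the conclusion.
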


\begin{proof}[Proof of Lemma \ref{lem:be}]
    For $(x,y)\ne (0,0)$, we compute directly that 
    \begin{align*}
        \be &= \frac{1}{a} \left( (1 + \cos^2\theta (1-a)) \partial_x + \cos\theta\sin\theta (1-a) \partial_y\right) \\
        &\! \stackrel{\eqref{eq:tildea}}{=} \frac{1}{a(x^2+y^2)}\left( 1 - x^2 \tilde{a} (x^2+y^2)) \partial_x - xy \tilde{a}(x^2+y^2) \partial_y\right),
    \end{align*}
    hence the result. 
\end{proof}

Lemma \ref{lem:be} justifies the validity of the chart defined in \eqref{eq:Zcoords}. Moreover, relative to the chart \eqref{eq:chart2}, this corresponds to the change of variable 
\begin{align}
    (r,\theta,\mu,\bar{\mu}) \to (z,\bar{z},\nu,\bar\nu), \quad \text{where} \quad z = r e^{i\theta}, \quad \nu = \mu e^{i\theta}.
    \label{eq:znuchart}
\end{align}
Pushing forward the distribution $\widetilde{\DD} = \Cm \partial_{\bar\mu}\oplus \Cm \widetilde{\Xi}$, we find that $\partial_{\bar{\mu}} = e^{-i\theta} \partial_{\bar{\nu}} \propto \partial_{\bar{\nu}}$, and a straightforward calculation shows that the vector field $\widetilde{\Xi}$ defined in \eqref{eq:Dpolar} becomes $\frac{e^{-i\theta}}{2(1+\kappa r^2)} \Xi$, where $\Xi$ is given in \eqref{eq:Xi}.

\section{Proofs of Theorem \ref{thm:main} and Corollary \ref{cor}} \label{sec:proofmain}

Before proving Theorem \ref{thm:main}, we first recall the definition of {\em holomorphic blow-down structure}, originally defined in \cite{bohr2024invariant} as a way to make conditions (1)-(2) of Section \ref{sec:prelims} open, and hence amenable to perturbation theory. First, to make (1) quantitative, one must account for a systematic degeneracy at the glancing $\partial_0 SM$, which is embodied by the $C_\alpha^\infty$-structure on $\partial_+ SM$ defined in \cite[Section 2.3]{bohr2024invariant}. Since checking condition (a) below boils down to an explicit computation outlined in the proof of Theorem \ref{thm:main}, we skip the specific definition. Second, as points of $Z\backslash SM$ approach $SM$, the blow-down behavior of a holomorphic map $\beta\colon Z\to \Cm^2$ satisfying (1)-(2) can be captured using a special Hermitian metric $\underline{\Omega}$ on $Z$, and the standard $\Omega_{\mathbb{C}^2}:=i(\d w\wedge \d\bar{w}+\d\xi\wedge \d\bar{\xi})$ on $\Cm^2_{w,\xi}$. We define $\underline{\Omega}$ in terms of the coordinates $(z,\nu)$ in \eqref{eq:znuchart} (see also \cite[Sec. 2.4.1]{bohr2024invariant} for a coordinate invariant description): write $\DD = \Cm \partial_{\bar{\nu}}\oplus \Cm \Xi$ with $\Xi$ defined in \eqref{eq:Xi}, then 
\begin{align}
    \underline{\Omega} := i(\eta_1\wedge\bar{\eta}_1+\eta_2\wedge\bar{\eta}_2),
\end{align}
where $\eta_1,\eta_2$ are the unique one-forms on $Z$ satisfying
\begin{align}
\begin{split}
    \eta_1 (\partial_\nu) &= \eta_1 (\partial_{\bar{\nu}}) = \eta_1(\Xi) = 0,\qquad \eta_1(\overline{\Xi}) = 1-|\nu|^4, \\
    \eta_2 (\overline{\Xi}) &= \eta_2 (\partial_{\bar{\nu}}) = \eta_2(\Xi) = 0,\qquad \eta_2(\partial_\nu) = 1.
\end{split}    
\label{eq:eta}
\end{align}

\begin{definition}\label{def:hbs}
    A smooth map $\beta\colon Z\to \mathbb{C}^2$ has {\bf holomorphic blow-down structure} if it has the following properties:
    \begin{itemize}
        \item[(a)] The restriction $\beta|_{\partial_{+}SM}:\partial_{+}SM\rightarrow\mathbb{C}^2$ is a totally real $C^{\infty}_{\alpha}-$embedding.
        \item[(b)] The restriction $\beta|_{Z^\circ}$ is a biholomorphism onto its image.
        \item[(c)] Writing $\beta^*\Omega_{\Cm^2} = i \sum_{j,k=1}^2 H_{jk} \eta_j\wedge \bar{\eta}_k$ for some Hermitian matrix field $H = \{H_{jk}\}_{j,k=1}^2$, there exists $c>0$ such that $H-cId$ is positive semidefinite\footnote{In \cite{localglobalblowdownstransport} such a condition is written concisely as $\beta^*\Omega_{\Cm^2} \geq c\underline{\Omega}$.}.
    \end{itemize}
\end{definition}

\begin{proof}[Proof of Theorem \ref{thm:main}]
\textbf{Proof of (a).} Following \cite{localglobalblowdownstransport}, in order to show the totally real $C^{\infty}_{\alpha}-$embedding, we need first show that $\beta|_{\partial_{+} SM}$ is injective. Evaluating $\beta$ at $(z,\nu)=(Re^{i\theta},e^{i(\theta+\pi+\alpha)})$ with $\alpha\in[\frac{-\pi}{2},\frac{\pi}{2}]$, we write
\begin{align}
    (w,\xi)(Re^{i\theta},e^{i(\theta+\pi+\alpha)})= ((1-e^{2i\alpha})Re^{i\theta}e^{\frac{\kappa R^2(1-e^{2i\alpha})}{2}}, -e^{i\alpha}e^{i\theta}e^{\frac{\kappa R^2(1-e^{2i\alpha})}{2}}).
\end{align}
Notice that $w/\xi = 2iR\sin\alpha$ uniquely determines $\alpha\in [-\pi/2,\pi/2]$, after which $e^{i\theta}$ is uniquely determined by either $w$ or $\xi$. By \cite[Lemma 4.1]{localglobalblowdownstransport}, the $C^{\infty}_{\alpha}$-embedding property boils down to verifying that the rescaled Jacobian 
\[
\frac{1}{\cos^2(\alpha)} \left| \frac{\partial w}{\partial\theta} \frac{\partial\xi}{\partial\alpha} - \frac{\partial w}{\partial\alpha} \frac{\partial \xi}{\partial\theta} \right|^2
\]
is nowhere-vanishing. We thus compute
\begin{align*}
\left[
    \begin{array}{cc}
       \frac{\partial w}{\partial\theta}  & \frac{\partial w}{\partial\alpha} \\
        \frac{\partial \xi}{\partial\theta} & \frac{\partial \xi}{\partial\alpha} 
    \end{array}\right] = i e^{i\theta} e^{\frac{\kappa R^2}{2}(1 - e^{2i\alpha})} \left[
    \begin{array}{cc}
       R (1 - e^{2i\alpha}) & R e^{2i\alpha} (-2 - \kappa R^2+ \kappa R^2e^{2i\alpha}) \\
       -e^{i\alpha} & -e^{i\alpha} (1 - \kappa R^2 e^{2i\alpha})
    \end{array}\right].
\end{align*}

Substituting, we find:
\[
\frac{1}{\cos^2(\alpha)} \left| \frac{\partial w}{\partial\theta} \frac{\partial\xi}{\partial\alpha} - \frac{\partial w}{\partial\alpha} \frac{\partial \xi}{\partial\theta} \right|^2 = 4 R^2|e^{\kappa R^2(1 - e^{2i\alpha})}| \neq 0.
\]

\textbf{Proof of (b).}  By \cite{localglobalblowdownstransport}, it suffices to prove that the Cauchy-Riemann equations are satisfied and that $\beta$ is bijective onto its image. By the inverse function theorem, injectivity and holomorphicity will show that the inverse (defined on $\beta(\Dm_R^\circ\times \Dm^\circ)$) will be automatically holomorphic. The computation that $\d w (\DD) = \d\xi (\DD) = 0$ is a direct calculation which we omit, best done in the polar chart \eqref{eq:chart2}, where 
\begin{align}
    w = r e^{i\theta} (1-\mu^2) e^{\kappa r^2(1-\mu^2)/2}, \qquad \xi = \mu e^{i\theta} e^{\kappa r^2 (1-\mu^2)/2},    
    \label{eq:wxiPolar}
\end{align}
and $\DD$ is given in \eqref{eq:Dpolar}. 

Next, we check injectivity on $Z^\circ$, by recovering {\em at most} one pair $(z,\nu)\in \Dm_R\times \Dm^\circ$ from the relations
\begin{align*}
w = (z-\nu^2 \bar{z}) e^{\kappa(z\zbar - \nu^2 \zbar^2)/2}, \qquad \xi = \nu e^{\kappa(z\zbar - \nu^2 \zbar^2)/2}.
\end{align*}
First observe that if $\xi = 0$, then necessarily $\nu = 0$ and we are left with the relation $w = z e^{\kappa z\zbar/2}$. Writing $u := |z|^2$, we derive $|w|^2 = u e^{\kappa u} =: f(u)$. Noticing that $f'(u) = (1+\kappa u) e^{\kappa u}$, the hypothesis $1+\kappa R^2>0$ implies that $f'(u)>0$ on $[0,R^2]$. Hence equation $f(u)= |w|^2$ determines a unique $u = |z|^2$, then $z$ is determined via $z = w e^{-\kappa |z|^2}$.

Now suppose $\xi\ne 0$. Then $\nu \ne 0$, and we can define the quantity 
\begin{align}
    c := \frac{w}{\xi} = \frac{z-\nu^2 \zbar}{\nu} = r\frac{1-\mu^2}{\mu}.
    \label{eq:c}
\end{align}
With $c$ known from data, the above relation helps recover $z$ when $\nu$ is determined, via
\begin{align}
    z = \frac{\nu}{1 - |\nu|^4}(c + \bar{c}|\nu|^2).
    \label{eq:znu}
\end{align}
Substituting \eqref{eq:znu} into $\xi$ and $|\xi|^2$, we obtain
\begin{align}
    \xi &= \nu e^{\frac{\kappa |\nu|^2}{2(1-|\nu|^4)} (|c|^2 + |\nu|^2 c^2)}, \label{eq:nutoxi}\\
    |\xi|^2 &= h_c(|\nu|^2), \qquad h_c(u):= u e^{\frac{\kappa u}{1-u^2} (|c|^2 + u \text{Re}(c^2))}. \label{eq:nutomodxi}
\end{align}
Writing $c = c_R+ic_I$ with $c_R,c_I\in \Rm$, notice that $h_c(u) = ue^{\kappa u \big( \frac{c_R^2}{1-u} + \frac{c_I^2}{1+u}  \big)}$. If equation \eqref{eq:nutomodxi} determines a unique $|\nu|^2\in [0,1)$, then $\nu$ is uniquely determined from \eqref{eq:nutoxi}. Hence we now study the solvability of \eqref{eq:nutomodxi}. Differentiating, we find
\begin{align}
    h'_c(u) = \left( 1 + \kappa u \left( \frac{c_R^2}{(1-u)^2} + \frac{c_I^2}{(1+u)^2}     \right) \right) \exp\left( \kappa u \left( \frac{c_R^2}{1-u} + \frac{c_I^2}{1+u}  \right)  \right).    
    \label{eq:hprimec}
\end{align}
In addition, observe that for any solution $u = |\nu|^2$ of \eqref{eq:nutomodxi}, the reconstructed $z$ from \eqref{eq:znu} satisfies 
\begin{align*}
    z\zbar = u \left( \frac{c_R^2}{(1-u)^2} + \frac{c_I^2}{(1+u)^2}\right)
\end{align*}
Comparing to \eqref{eq:hprimec}, we thus find that $\text{sgn} (h'_c(u)) = \text{sgn}(1+\kappa z\zbar)$. Since we seek solutions satisfying $|z|\le R$ with $\kappa R^2>-1$, we must have $1+\kappa z\zbar>0$, and thus any admissible solution $u$ to \eqref{eq:nutomodxi} must also satisfy $h'_c(u)>0$. We will conclude by showing that the region $\{h'_c >0\}$ consists of at most a contiguous interval of the form $[0,u_c)\subseteq [0,1)$, and where $h_c$ is injective so that \eqref{eq:nutomodxi} can have at most one admissible solution. To prove this last statement: if $\kappa\ge 0$, it's immediately clear from \eqref{eq:hprimec} that $h'_c(u)>0$ on $[0,1)$; if $\kappa <0$, notice that for all $u\in [0,1)$
\begin{align*}
    \frac{\d}{\d u} \left(  1 + \kappa u \left( \frac{c_R^2}{(1-u)^2} + \frac{c_I^2}{(1+u)^2}     \right)  \right) = \kappa \left( c_R^2 \frac{1+u}{(1-u)^3} + c_I^2 \frac{1-u}{(1+u)^3}\right).
\end{align*}
If $c=0$, then $h_c(u) = u$ is clearly injective. If $c\ne 0$, then the right side in the above display is negative on $[0,1)$, and since $h'_c(0) = 1>0$, the sign of $h'_c(u)$ can only go from positive to negative once as $u$ increases. The conclusion is reached. 

\medskip
\textbf{Proof of (c).} If $(\Xi^\vee, \overline{\Xi}^\vee, \partial_\nu^\vee, \partial_{\bar{\nu}}^\vee)$ are the dual one-forms to $(\Xi,\overline{\Xi}, \partial_\nu, \partial_{\bar{\nu}})$ on $Z^\circ$, we have 
\begin{align*}
    \beta^* \d f = \overline{\Xi}f \ \overline{\Xi}^\vee + \cancel{\Xi f}\ \Xi^\vee + \partial_\nu f\ \partial_\nu^\vee + \cancel{\partial_{\bar{\nu}} f}\ \partial_{\bar{\nu}}^\vee = \frac{\overline{\Xi}f}{1-|\nu|^4}\ \eta_1 + \partial_{\nu} f\ \eta_2, \qquad f\in \{w,\xi\},
\end{align*}
where $\eta_1,\eta_2$ are defined in \eqref{eq:eta}. Denoting $A:=e^{\kappa (z\zbar - \zbar^2 \nu^2)/2}$, a computation gives
\begin{align}
\begin{split}
    \beta^* \d w &=  A(a\eta_1 + b\eta_2), \qquad \beta^* \d\xi = A(c\eta_1 + d\eta_2), \quad \text{where} \\
    \left[
    \begin{array}{cc}
       a & b \\ c & d
    \end{array}\right] &:= \left[
    \begin{array}{cc}
       2(1+\kappa\bar{z}z-\kappa\nu^2\bar{z}^2) & -2\nu \bar{z} - \kappa\nu\bar{z}^2z +\kappa\nu^3\bar{z}^3 \\
       2\kappa \bar{z} \nu & 1 - \kappa \bar{z}^2 \nu^2
    \end{array}\right].
\end{split}   
\label{eq:abcd}
\end{align}
Thus the matrix $H$ from Definition \ref{def:hbs} has components 
\begin{align*}
    H_{11}=|A|^2(|a|^2+|c|^2),\quad H_{12}=\overline{H}_{21}=|A|^2(a\bar{b}+c\bar{d}), \quad H_{22}=|A|^2(|d|^2+|b|^2).
\end{align*}
To estimate the minimal eigenvalue $\lambda_{\min}$ of \( H \), following \cite[Theorem 4.1]{localglobalblowdownstransport}, 
\[
\lambda_{\min} \geq \frac{\det(H)}{\text{tr}(H)}=|A|^2\frac{|ad-bc|^2}{|a|^2+|b|^2+|c|^2+|d|^2}.
\]
When $1+\kappa R^2>0$, we will have that $|A|^2\geq e^{-2|\kappa|R^2}$.
Given that $\det(H) = 2+2\kappa z\bar{z}$, then $\det (H)\geq 2-2|\kappa|R^2$, whenever $\kappa R^2 \in (-1,1)$. When $\kappa R^2\geq 1$, we have $2+2\kappa z\bar{z}\geq 2$. In either case, $\det (H)$ is bounded below by some positive constant. To bound $\text{tr}(H)$ from above, we find uniform bounds:
\begin{align*}
    |a|\leq 2(1+2|\kappa|R^2), \quad|b|\leq 2R(1 +|\kappa|R^2),\quad|c|\leq2|\kappa|R,\quad |d|\leq 1+|\kappa| R^2.
\end{align*}
Combining these estimates gives a uniform positive lower bound for $\lambda_{\min}$, and fulfills (c).
\end{proof}

\begin{remark} Upon restricting the relation $\d\beta(\DD)=0$ to $SM$, it follows that $(w|_{SM},\xi|_{SM})$ are smooth and fiberwise holomorphic invariant functions given by (setting $\mu = e^{i\alpha}$ in \eqref{eq:wxiPolar})
\begin{align}
    w|_{SM} = re^{i\theta}(1-e^{2i\alpha})e^{\frac{\kappa r^2(1-e^{2i\alpha})}{2}}, \quad \xi|_{SM} = e^{i\alpha}e^{i\theta}e^{\frac{\kappa r^2(1-e^{2i\alpha)}}{2}}.
    \label{eq:wxiSM}
\end{align}
That $X(\xi|_{SM})=X(w|_{SM})=0$ can also be checked directly using \eqref{eq:X}, while fiberwise holomorphicity can be directly inferred from their expression in terms of $e^{i\alpha}$.
\end{remark}

\begin{proof}[Proof of Corollary \ref{cor}]
    Recall that in the chart $(z,\nu)$ given in \eqref{eq:znuchart}, the function $\zeta = z e^{\kappa z\zbar/2}$ is a holomorphic coordinate in $(\Dm_R,g_\kappa)$, with a smooth inverse $z(\zeta)$ (note that the map $[0,R]\ni r\mapsto r e^{\kappa r^2/2}$ is smoothly invertible precisely when $\kappa R^2 >-1$). Take $a\in {\cal H}_m$ as in the statement. Then $a$ takes the form $A(\zeta)\d\zeta^m$, where $A$ has a power series \eqref{eq:pse} that extends smoothly to $\Dm_{R_2}$. Then with the functions $(w,\xi)$ defined in Theorem \ref{thm:main}, first observe that for $\kappa\ge 0$, $\max_Z |w| = R_2$, and thus the expression $A\circ w$ makes sense in $Z$ and is smooth there.  Define the function $f(z,\nu) := \xi(z,\nu)^m A(w(z,\nu))$. It is smooth on $Z$ and satisfies $\Xi f = \partial_{\bar{\nu}}f = 0$ on $Z$ as a direct consequence of Leibniz' rule. Moreover, $f|_{SM} = (\xi|_{SM})^m A(w|_{SM})$ is of the form $\sum_{k\ge m} u_k$ as can be directly inspected from the expression \eqref{eq:wxiSM}, and where the lowest Fourier mode in $e^{i\alpha}$ is $u_m = (e^{i\alpha} e^{i\theta} e^{\kappa r^2/2})^m A(r e^{i\theta} e^{\kappa r^2/2})$. The claim follows upon observing that $\zeta = re^{i\theta} e^{\kappa r^2/2}$ and 
    \begin{align*}
        e^{i\alpha} e^{i\theta} e^{\kappa r^2/2} = e^{i\theta} e^{\kappa r^2/2} ((1+\kappa r^2) \d r + i r\d \theta) (\cos\alpha\ \be_r + \sin\alpha\ \be_\theta) = \d\zeta (e^{i\alpha} \cdot \be_r),
    \end{align*}
    that is, $u_m(r,\theta,\alpha) = A(r e^{i\theta} e^{\kappa r^2/2}) \d\zeta^m( (e^{i\alpha}\cdot \be_r)^{\otimes m})$. Hence $\pi_{m*} f = a$
\end{proof}

\paragraph{Acknowledgement.} F.M. acknowledges partial funding from NSF CAREER grant DMS-1943580.

\bibliographystyle{plain}

\end{document}